\documentclass[leqno,11pt]{amsart}\usepackage{psfrag}
\usepackage[dvips]{color}
\usepackage{multicol}
\usepackage{picinpar}
\usepackage{enumerate}
\usepackage[hyphens]{url}        %paquete para el hypervinculo con el indice, referencias, etc.
\usepackage[breaklinks,colorlinks=true,linkcolor=blue,citecolor=blue, urlcolor=blue]{hyperref}                                                 %configuracion para el hypervinculo
\usepackage[utf8x]{inputenc}
\usepackage{txfonts}
\usepackage{graphicx}
\usepackage[T1]{fontenc}

\usepackage{amsmath,amstext,amssymb,amsopn,amsthm,mathrsfs}
\theoremstyle{plain}
\newtheorem{teo}{Theorem}[section]
\newtheorem{defi}{Definition}[section]
\newtheorem{cor}{Corollary}[section]
\newtheorem{lem}{Lemma}[section]
\newtheorem{prop}{Proposition}[section]
\newtheorem{obs}{Observation}[section]
\numberwithin{equation}{section}
\newtheorem{ejm}{Example}[section]

  % unnumbered

\begin{document}

\title[Gaussian Bessel Potentials \& Bessel Fractional Derivatives.] {Boundedness of Gaussian Bessel Potentials and Bessel Fractional Derivatives on variable Gaussian Besov-Lipschitz spaces.}

\author{Ebner Pineda}
\address{Escuela Superior Polit\'ecnica del Litoral. ESPOL, FCNM, Campus Gustavo Galindo Km. 30.5 V\'ia Perimetral, P.O. Box 09-01-5863, Guayaquil, ECUADOR.}
\email{epineda@espol.edu.ec}
\author{Luz Rodriguez}
\address{Escuela Superior Polit\'ecnica del Litoral. ESPOL, FCNM, Campus Gustavo Galindo Km. 30.5 V\'ia Perimetral, P.O. Box 09-01-5863, Guayaquil, ECUADOR.}
\email{luzeurod@espol.edu.ec}
\author{Wilfredo~O.~Urbina}
\address{Department of Mathematics, Actuarial Sciences and Economics, Roosevelt University, Chicago, IL,
   60605, USA.}
\email{wurbinaromero@roosevelt.edu}

\subjclass[2010] {Primary 42B25, 42B35; Secondary 46E30, 47G10}

\keywords{Bessel potential, fractional derivative, variable exponent, Besov-Lipschitz, Gaussian measure.}

\begin{abstract}
In this paper we study the regularity properties of the Gaussian Bessel potentials and Gaussian Bessel fractional derivatives on variable Gaussian Besov-Lipschitz spaces $B_{p(\cdot),q(\cdot)}^{\alpha}(\gamma_{d}),$ that were defined in a previous paper \cite{Pinrodurb}, under certain conditions on $p(\cdot)$ and $q(\cdot)$.
\end{abstract}

\maketitle

\section{Introduction and Preliminaries}

On $\mathbb{R}^d$ let us consider the Gaussian measure
\begin{equation}
\gamma_d(x)=\frac{e^{-\left|x\right|^2}
}{\pi^{d/2}} dx, \, x\in{\mathbb R}^d
\end{equation}
 and the Ornstein-Uhlenbeck
differential operator
\begin{equation}\label{OUop}
L=\frac12\triangle_x-\left\langle x,\nabla _x\right\rangle.
\end{equation}

Let $\nu=(\nu _1,...,\nu_d)$ be a
multi-index such that $\nu _i \geq 0, i= 1, \cdots, d$,  let $\nu
!=\prod_{i=1}^d\nu _i!,$ $\left| \nu \right| =\sum_{i=1}^d\nu _i,$ $%
\partial _i=\frac \partial {\partial x_i},$ for each $1\leq i\leq d$ and $%
\partial ^\nu =\partial _1^{\nu _1}...\partial _d^{\nu _d},$ consider the normalized Hermite polynomials of order $\nu$ in $d$ variables,
\begin{equation}
h_\nu (x)=\frac 1{\left( 2^{\left| \beta \right| }\nu
!\right)
^{1/2}}\prod_{i=1}^d(-1)^{\nu _i}e^{x_i^2}\frac{\partial ^{\nu _i}}{%
\partial x_i^{\nu _i}}(e^{-x_i^2}),
\end{equation}
it is well known, that the Hermite polynomials are
eigenfunctions of the operator $L$,
\begin{equation}\label{eigen}
L h_{\nu}(x)=-\left|\nu \right|h_\nu(x).
\end{equation}
Given a function $f$ $\in L^1(\gamma _d)$ its
$\nu$-Fourier-Hermite coefficient is defined by
\[
\hat{f}(\nu) =<f, h_\nu>_{\gamma_d}
=\int_{{\mathbb R}^d}f(x)h_\nu (x)\gamma _d(dx).
\]
Let $C_n$ be the closed subspace of $L^2(\gamma_d)$ generated by
the linear combinations of $\left\{ h_\nu \ :\left| \nu
\right| =n\right\}$. By the orthogonality of the Hermite
polynomials with respect to $\gamma_d$ it is easy to see that
$\{C_n\}$ is an orthogonal decomposition of $L^2(\gamma_d)$,
$$ L^2(\gamma_d) = \bigoplus_{n=0}^{\infty} C_n,$$
this decomposition is called the Wiener chaos.

Let $J_n$ be the orthogonal projection  of $L^2(\gamma_d)$ onto
$C_n$, then if $f\in L^2(\gamma_d)$
\[
J_n f=\sum_{\left|\nu\right|=n}\hat{f}(\nu) h_\nu.
\]
Let us define the Ornstein-Uhlenbeck semigroup $\left\{
T_t\right\} _{t\geq 0}$ as
\begin{eqnarray}\label{01}
\nonumber T_t f(x)&=&\frac 1{\left( 1-e^{-2t}\right) ^{d/2}}\int_{{\mathbb R}^d}e^{-\frac{%
e^{-2t}(\left| x\right| ^2+\left| y\right| ^2)-2e^{-t}\left\langle
x,y\right\rangle }{1-e^{-2t}}}f(y)\gamma _d(dy)\\
& = & \frac{1}{\pi^{d/2}(1-e^{-2t})^{d/2}}\int_{\mathbb R^d} e^{-
\frac{|y-e^{-t}x|^2}{1-e^{-2t}}} f(y) dy
\end{eqnarray}
The family $\left\{ T_t\right\}_{t\geq 0}$ is a strongly
continuous Markov semigroup on $L^p(\gamma_d)$, $1 \leq p\leq
\infty$, with infinitesimal generator $L$. Also, by a change of
variable we can write,
\begin{equation}\label{t1}
T_t f(x)=\int_{{\mathbb R}^d} f(\sqrt{1-e^{-2t}}u + e^{-t}x)\gamma
_d(du).
\end{equation}

Now, by Bochner subordination formula, see Stein \cite{st1} page 61, we
define the Poisson-Hermite semigroup $\left\{ P_t\right\} _{t\geq
0}$ as
\begin{equation}\label{PoissonH}
 P_t f(x)=\frac 1{\sqrt{\pi }}\int_0^{\infty} \frac{e^{-u}}{\sqrt{u}}T_{t^2/4u}f(x)du
\end{equation}
From (\ref{01}) we
obtain, after the change of variable $r=e^{-t^2/4u}$,
\begin{eqnarray}\label{03}
\nonumber P_t f(x)&=&\frac 1{2\pi
^{(d+1)/2}}\int_{{\mathbb R}^d}\int_0^1t\frac{\exp \left( t^2/4\log
r\right) }{(-\log r)^{3/2}}\frac{\exp \left( \frac{-\left|
y-rx\right| ^2}{1-r^2}\right) }{(1-r^2)^{d/2}}\frac{dr}rf(y)dy\\
&=& \int_{{\mathbb R}^d} p(t,x,y) f(y)dy,
\end{eqnarray}
with
\begin{equation}
p(t,x,y) = \frac 1{2\pi ^{(d+1)/2}}\int_0^1t\frac{\exp \left(
t^2/4\log r\right) }{(-\log r)^{3/2}}\frac{\exp \left(
\frac{-\left| y-rx\right| ^2}{1-r^2}\right)
}{(1-r^2)^{d/2}}\frac{dr}r.
\end{equation}
Also by the change of variables $s= t^2/4u$ we have,
\begin{equation}\label{poissonrepstable}
P_t f(x)=\frac 1{\sqrt{\pi }}\int_0^{\infty} \frac{e^{-u}}{\sqrt{u}}T_{t^2/4u}f(x)du
=\int_0^{\infty} T_s f(x) \mu^{(1/2)}_t(ds),
\end{equation}
where the measure
\begin{equation}\label{onesided1/2}
\mu^{(1/2)}_t(ds) = \frac t{2\sqrt{\pi
}}\frac{e^{-t^2/4s}}{s^{3/2}}ds,
\end{equation}
is called the one-side stable measure on $(0, \infty)$ of order
$1/2$.

The family $\left\{ P_t\right\}_{t\geq 0}$ is also a strongly
continuous semigroup on $L^p(\gamma_d)$, $1 \leq p < \infty$,
with infinitesimal generator $-(-L)^{1/2}$. In what follows, often we are  going to use the notation $$u(x,t) = P_{t}f(x),$$
and
$$ u^{(k)}(x,t) = \frac{\partial^k}{\partial t^k} P_{t}f(x).$$

Observe that by (\ref{eigen}) we have that
\begin{equation}\label{OUHerm}
T_t h_\nu(x)=e^{-t\left| \nu\right|}h_\nu(x),
\end{equation}
and
\begin{equation}\label{PHHerm}
 P_t h_\nu(x)=e^{-t\sqrt{\left| \nu\right|}}h_\nu(x),
\end{equation}
i.e. the Hermite polynomials are eigenfunctions of $T_t$ and $P_t$ for any $t \geq 0$.\\

For completeness, let us get more background on variable Lebesgue spaces with respect to a Borel measure $\mu$.

A $\mu$-measurable function $p(\cdot):\Omega\rightarrow [1,\infty]$ is an exponent function, the set of all the exponent functions will be denoted by  $\mathcal{P}(\Omega,\mu)$. For $E\subset\Omega$ we set $$p_{-}(E)=\text{ess}\inf_{x\in E}p(x) \;\text{and}\; p_{+}(E)=\text{ess}\sup_{x\in E}p(x),$$
and $\Omega_{\infty}=\lbrace x\in \Omega:p(x)=\infty\rbrace$.\\
  We use the abbreviations $p_{+}=p_{+}(\Omega)$ and $p_{-}=p_{-}(\Omega)$.

\begin{defi}\label{deflogholder}
Let $E\subset \mathbb{R}^{d}$. We say that $\alpha(\cdot):E\rightarrow\mathbb{R}$ is locally log-H\"{o}lder continuous, and denote this by $\alpha(\cdot)\in LH_{0}(E)$, if there exists a constant $C_{1}>0$ such that
			\begin{eqnarray*}
				|\alpha(x)-\alpha(y)|&\leq&\frac{C_{1}}{log(e+\frac{1}{|x-y|})}
			\end{eqnarray*}
			for all $x,y\in E$. We say that $\alpha(\cdot)$ is log-H\"{o}lder continuous at infinity with base point at $x_{0}\in \mathbb{R}^{d}$, and denote this by $\alpha(\cdot)\in LH_{\infty}(E)$, if there exist  constants $\alpha_{\infty}\in\mathbb{R}$ and $C_{2}>0$ such that
			\begin{eqnarray*}
				|\alpha(x)-\alpha_{\infty}|&\leq&\frac{C_{2}}{log(e+|x-x_{0}|)}
			\end{eqnarray*}
			for all $x\in E$. We say that $\alpha(\cdot)$ is log-H\"{o}lder continuous, and denote this by $\alpha(\cdot)\in LH(E)$ if both conditions are satisfied.
			The maximum, $\max\{C_{1},C_{2}\}$ is called the log-H\"{o}lder constant of $\alpha(\cdot)$.
\end{defi}

\begin{defi}\label{defPdlog}
			We say that $p(\cdot)\in\mathcal{P}_{d}^{log}(E)$, if $\frac{1}{p(\cdot)}$ is log-H\"{o}lder continuous and  denote by $C_{log}(p)$ or $C_{log}$ the log-H\"{o}lder constant of $\frac{1}{p(\cdot)}$.
		\end{defi}
		
\begin{defi}
For a $\mu$-measurable function $f:\mathbb{R}^{d}\rightarrow \overline{\mathbb{R}}$, we define the modular
\begin{equation}
\rho_{p(\cdot),\mu}(f)=\displaystyle\int_{\mathbb{R}^{d}\setminus\Omega_{\infty}}|f(x)|^{p(x)}\mu(dx)+\|f\|_{L^{\infty}(\Omega_{\infty},\mu)},
\end{equation}

The variable exponent Lebesgue space on $\mathbb{R}^{d}$, $L^{p(\cdot)}(\mathbb{R}^{d},\mu)$ consists on those $\mu\_$measurable functions $f$ for which there exists $\lambda>0$ such that $\rho_{p(\cdot),\mu}\left(f/\lambda\right)<\infty,$ i.e.
\begin{equation*}
L^{p(\cdot)}(\mathbb{R}^{d},\mu) =\left\{f:\mathbb{R}^{d}\to\overline{ \mathbb{R}},\; \mu\_\text{measurable and } \; \rho_{p(\cdot),\mu}\left(f/\lambda\right)<\infty, \; \text{for some} \;\lambda>0\right\}.
\end{equation*}
and the norm
\begin{equation}
\|f\|_{p(\cdot),\mu}=\inf\left\{\lambda>0:\rho_{p(\cdot),\mu}(f/\lambda)\leq 1\right\}.
\end{equation}
\end{defi}
\begin{obs} When $\mu$ is the Lebesgue measure, we write  $\rho_{p(\cdot)}$ and $\|f\|_{p(\cdot)}$ instead of $\rho_{p(\cdot),\mu}$ and $\|f\|_{p(\cdot),\mu}$ respectively.\\
\end{obs}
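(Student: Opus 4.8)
The statement in question is a notational convention rather than a mathematical assertion, so strictly speaking there is nothing substantive to prove: it merely declares that the subscript $\mu$ will be suppressed in the symbols $\rho_{p(\cdot),\mu}$ and $\|\cdot\|_{p(\cdot),\mu}$ whenever $\mu$ is Lebesgue measure. The only thing worth recording is that the abbreviation is unambiguous, and the plan is to check this by direct substitution into the two definitions it shortens, confirming that no new object is introduced.

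First I would substitute $\mu(dx)=dx$ into the modular
\[
\rho_{p(\cdot),\mu}(f)=\int_{\mathbb{R}^{d}\setminus\Omega_{\infty}}|f(x)|^{p(x)}\,\mu(dx)+\|f\|_{L^{\infty}(\Omega_{\infty},\mu)},
\]
which yields $\rho_{p(\cdot)}(f)=\int_{\mathbb{R}^{d}\setminus\Omega_{\infty}}|f(x)|^{p(x)}\,dx+\|f\|_{L^{\infty}(\Omega_{\infty})}$, a well-defined quantity no longer referring to $\mu$. Since the norm $\|f\|_{p(\cdot),\mu}=\inf\{\lambda>0:\rho_{p(\cdot),\mu}(f/\lambda)\le 1\}$ is built solely from the modular, the same specialization immediately gives $\|f\|_{p(\cdot)}=\inf\{\lambda>0:\rho_{p(\cdot)}(f/\lambda)\le 1\}$. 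Thus both abbreviated symbols are determined by a single evaluation $\mu=dx$ of quantities already defined in the preceding definition.

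There is no genuine obstacle here, precisely because the convention introduces no content beyond the general definitions and therefore cannot conflict with them. The one point a reader should keep in mind — the closest thing to an actual step in the verification — is that the set $\Omega_{\infty}=\{x:p(x)=\infty\}$ is defined pointwise and so is unaffected, whereas the essential infimum and supremum that produce $p_{-}(E)$, $p_{+}(E)$, together with the measurability notion underlying "$\mu$-measurable", must now be interpreted relative to Lebesgue measure once $\mu$ is dropped. With that understanding the two notations coincide exactly in the Lebesgue case, and the observation is justified.
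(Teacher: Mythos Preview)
Your proposal is correct: the observation is purely a notational convention, and the paper gives no proof for it either---it simply states the abbreviation and moves on. Your additional check that the substitution $\mu(dx)=dx$ leaves the modular and norm well defined is harmless extra verification beyond what the paper offers.
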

\begin{teo} (Norm conjugate formula)
Let  $\nu$ a complete, $\sigma$-finite measure on $\Omega$.  $p(\cdot)\in \mathcal{P}(\Omega,\nu)$, then\\
\begin{equation}\label{normaconjugada}
\frac{1}{2}\|f\|_{p(\cdot),\nu}\leq \|f\|^{'}_{p(\cdot),\nu}\leq 2\|f\|_{p(\cdot),\nu},
\end{equation}
for all $f$ $\nu$-measurable on $\Omega$,where
 $$\displaystyle\|f\|^{'}_{p(\cdot),\nu}=\sup\left\{\int_{\Omega}|f||g|d\mu:g\in L^{p'(\cdot)}(\Omega,\nu),\|g\|_{p'(\cdot),\nu}\leq 1\right\}.$$
\end{teo}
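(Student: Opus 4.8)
The plan is to prove the two inequalities of \eqref{normaconjugada} separately: the right-hand bound is a direct consequence of H\"older's inequality, while the left-hand bound requires the explicit construction of a near-optimal test function $g$.

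For the upper bound $\|f\|'_{p(\cdot),\nu}\le 2\|f\|_{p(\cdot),\nu}$, I would invoke the generalized H\"older inequality for variable Lebesgue spaces, namely that for every $g\in L^{p'(\cdot)}(\Omega,\nu)$ one has $\int_{\Omega}|f||g|\,d\nu\le\big(\tfrac{1}{p_-}+\tfrac{1}{(p')_-}\big)\|f\|_{p(\cdot),\nu}\|g\|_{p'(\cdot),\nu}$. Since $p_-\ge 1$ and $(p')_-\ge 1$, the constant is at most $2$, so taking the supremum over all $g$ with $\|g\|_{p'(\cdot),\nu}\le 1$ yields the claim. This inequality itself follows from integrating the pointwise Young inequality $ab\le a^{p(x)}/p(x)+b^{p'(x)}/p'(x)$ applied to the normalized functions, treating the set $\Omega_{\infty}$, where $p=\infty$ and $p'=1$, directly.

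For the lower bound $\tfrac12\|f\|_{p(\cdot),\nu}\le\|f\|'_{p(\cdot),\nu}$ I would argue as follows. If $\|f\|_{p(\cdot),\nu}=0$ then $f=0$ $\nu$-a.e.\ and there is nothing to prove, so assume first that $0<\lambda:=\|f\|_{p(\cdot),\nu}<\infty$. Since $\rho_{p(\cdot),\nu}(f/\lambda')$ is non-increasing in $\lambda'$ and bounded by $1$ for every $\lambda'>\lambda$, monotone convergence as $\lambda'\downarrow\lambda$ gives $\rho_{p(\cdot),\nu}(f/\lambda)\le 1$, that is $\int_{\Omega\setminus\Omega_{\infty}}(|f|/\lambda)^{p(x)}\,d\nu+\|f/\lambda\|_{L^{\infty}(\Omega_{\infty},\nu)}\le 1$. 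At least one of the two summands is $\ge\tfrac12$, and I split into two cases. If $\int_{\Omega\setminus\Omega_{\infty}}(|f|/\lambda)^{p(x)}\,d\nu\ge\tfrac12$, take $g=(|f|/\lambda)^{p(\cdot)-1}$ on $\Omega\setminus\Omega_{\infty}$ and $g=0$ on $\Omega_{\infty}$; using $(p-1)p'=p$ one checks $\rho_{p'(\cdot),\nu}(g)=\int(|f|/\lambda)^{p}\le 1$, so $\|g\|_{p'(\cdot),\nu}\le 1$, while $\int_{\Omega}|f||g|\,d\nu=\lambda\int(|f|/\lambda)^{p}\ge\lambda/2$. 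If instead $\|f/\lambda\|_{L^{\infty}(\Omega_{\infty},\nu)}\ge\tfrac12$, then $M:=\operatorname{ess\,sup}_{\Omega_{\infty}}|f|\ge\lambda/2$, and for $\varepsilon>0$ the set $\{x\in\Omega_{\infty}:|f(x)|>M-\varepsilon\}$ has positive measure, hence by $\sigma$-finiteness contains a subset $A$ of finite positive measure; the function $g=\nu(A)^{-1}\chi_A$ then satisfies $\|g\|_{p'(\cdot),\nu}=\|g\|_{L^{1}(\Omega_{\infty},\nu)}=1$ (since $p'=1$ on $\Omega_{\infty}$) and $\int_{\Omega}|f||g|\,d\nu\ge M-\varepsilon$, so letting $\varepsilon\to0$ gives $\ge\lambda/2$. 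In both cases $\|f\|'_{p(\cdot),\nu}\ge\lambda/2$.

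The remaining case $\|f\|_{p(\cdot),\nu}=\infty$ follows by exhaustion: choosing $f_n=\min(|f|,n)\chi_{E_n}$ with $E_n\uparrow\Omega$ of finite measure, one has $\|f_n\|_{p(\cdot),\nu}\uparrow\infty$ while $\|f_n\|'_{p(\cdot),\nu}\le\|f\|'_{p(\cdot),\nu}$ by monotonicity of the defining integral, and the finite-norm case already established gives $\|f\|'_{p(\cdot),\nu}\ge\tfrac12\|f_n\|_{p(\cdot),\nu}\to\infty$. I expect the main obstacle to be organizing this lower bound cleanly: the factor $\tfrac12$ is forced precisely by the need to allocate the norm between the region $\Omega\setminus\Omega_{\infty}$, where the dual function is the usual $|f|^{p(\cdot)-1}$, and the region $\Omega_{\infty}$, where $p'=1$ and the norm is detected by an $L^{\infty}$-mass requiring a normalized indicator instead; the verification that the constructed $g$ genuinely lies in the unit ball of $L^{p'(\cdot)}(\Omega,\nu)$ rests on the identity $\rho_{p(\cdot),\nu}(f/\lambda)\le 1$ secured by the monotone-convergence step.
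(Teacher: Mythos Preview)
The paper itself does not prove this statement; it simply refers to Corollary~3.2.14 in the book of Diening, Harjulehto, H\"ast\"o and R\r{u}\v{z}i\v{c}ka. Your attempt to supply a self-contained argument is therefore more than the paper offers, and your upper bound via H\"older (reduced to the pointwise Young inequality) is correct.

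The lower-bound argument, however, has a genuine gap. From $\rho_{p(\cdot),\nu}(f/\lambda)\le 1$ at $\lambda=\|f\|_{p(\cdot),\nu}$ you assert that ``at least one of the two summands is $\ge\tfrac12$'', but that implication runs the wrong way: a sum bounded \emph{above} by $1$ says nothing about either summand being large. Concretely, take $\Omega=(0,1)$ with Lebesgue measure, $p(x)=1/x$ (so $\Omega_\infty=\emptyset$), and $f=\lambda\,\chi_{(0,1/4)}$. Then $\rho_{p(\cdot)}(f/\lambda)=\int_0^{1/4}1\,dx=\tfrac14$, while for every $\mu<\lambda$ one has $\rho_{p(\cdot)}(f/\mu)=\int_0^{1/4}(\lambda/\mu)^{1/x}\,dx=+\infty$; hence $\|f\|_{p(\cdot)}=\lambda$, yet neither summand of the modular at $\lambda$ reaches $\tfrac12$. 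Your test function $g=(|f|/\lambda)^{p(\cdot)-1}=\chi_{(0,1/4)}$ then gives only $\int|f|\,|g|=\lambda/4$, short of the required $\lambda/2$. The obstruction is that when $p$ is essentially unbounded (yet finite a.e.) the map $\mu\mapsto\rho_{p(\cdot)}(f/\mu)$ can jump from $+\infty$ to a small value at $\mu=\|f\|$, so neither choosing $\lambda'<\lambda$ (modular possibly infinite, forcing $g$ out of the unit ball of $L^{p'(\cdot)}$) nor choosing $\lambda$ itself (modular possibly small) works directly. Your argument is valid under the additional hypothesis $p_+<\infty$, where the modular is left-continuous and hence equals $1$ at the norm; handling general $p(\cdot)\in\mathcal{P}(\Omega,\nu)$ requires a further reduction (for instance, truncation to sets on which $p$ is bounded), which is precisely what the cited reference supplies.
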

\begin{proof} See Corollary 3.2.14 in \cite{LibroDenHarjHas}.
\end{proof}
\begin{teo}
(H\"older's inequality) Let $\nu$ a complete, $\sigma$-finite measure on $\Omega$. $r(\cdot),q(\cdot)\in \mathcal{P}(\Omega,\nu)$, and define $p(\cdot)\in \mathcal{P}(\Omega,\nu)$ by $\displaystyle \frac{1}{p(x)}=\frac{1}{q(x)}+\frac{1}{r(x)}$, $\nu$ a.e. $x\in \Omega$.
Then, for all $f\in L^{q(\cdot)}(\Omega,\nu)$ and $g\in L^{r(\cdot)}(\Omega,\nu)$, $ fg\in L^{p(\cdot)}(\Omega,\nu)$ and
\begin{equation}\label{Holder generalizada}
\|fg\|_{p(\cdot),\nu}\leq 2\|f\|_{q(\cdot),\nu}\|g\|_{r(\cdot),\nu}
\end{equation}
\end{teo}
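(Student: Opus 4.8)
The plan is to establish the inequality by a direct modular computation based on the pointwise Young inequality, after reducing to normalized factors. The norm conjugate formula (\ref{normaconjugada}) settles immediately only the borderline case $p(\cdot)=1$, i.e. $r(\cdot)=q'(\cdot)$ (there $\|fg\|_{1,\nu}\le 2\|f\|_{q(\cdot),\nu}\|g\|_{q'(\cdot),\nu}$ is read off from the definition of $\|\cdot\|'_{p(\cdot),\nu}$); for general exponents, splitting the product forces one to bound a fresh variable-exponent norm whose exponent again satisfies a reciprocal relation, which produces a recursion, so I would not route the proof through it. First I would discard the degenerate cases, so that $a:=\|f\|_{q(\cdot),\nu}$ and $b:=\|g\|_{r(\cdot),\nu}$ are positive and finite; by the unit ball property of the modular, $\rho_{q(\cdot),\nu}(f/a)\le 1$ and $\rho_{r(\cdot),\nu}(g/b)\le 1$. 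Replacing $f,g$ by $f/a,g/b$ and using homogeneity of both sides, it suffices to assume $\|f\|_{q(\cdot),\nu}=\|g\|_{r(\cdot),\nu}=1$ and to prove $\|fg\|_{p(\cdot),\nu}\le 2$, which by the definition of the Luxemburg norm follows once I show $\rho_{p(\cdot),\nu}(fg/2)\le 1$.

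On the set where $q(\cdot)$ and $r(\cdot)$ are both finite, the relation $\tfrac1{p(x)}=\tfrac1{q(x)}+\tfrac1{r(x)}$ makes $q(\cdot)/p(\cdot)$ and $r(\cdot)/p(\cdot)$ pointwise conjugate exponents, both $\ge 1$, so Young's inequality gives
\[
|f(x)g(x)|^{p(x)}\le \frac{p(x)}{q(x)}|f(x)|^{q(x)}+\frac{p(x)}{r(x)}|g(x)|^{r(x)}\le |f(x)|^{q(x)}+|g(x)|^{r(x)}.
\]
Since $p(x)\ge 1$ forces $2^{-p(x)}\le \tfrac12$, dividing by $2^{p(x)}$ and integrating bounds the integral part of $\rho_{p(\cdot),\nu}(fg/2)$ by $\tfrac12\big(I_f+I_g\big)$, where $I_f=\int_{\{q(\cdot)<\infty\}}|f|^{q(\cdot)}\,d\nu$ and $I_g=\int_{\{r(\cdot)<\infty\}}|g|^{r(\cdot)}\,d\nu$. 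On the mixed sets where exactly one of the two exponents is infinite, $p(\cdot)$ coincides with the finite one and the remaining factor is $\le 1$ (by the normalization), so the same estimate persists; the factor $2$ in the statement is precisely what absorbs the constants generated here.

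The main obstacle is the set $\Omega_\infty=\{x:p(x)=\infty\}$, where the modular is measured in $L^\infty$ rather than by an integral, and the delicate point is to absorb this contribution without overspending the available budget. Here I would use that, by the hypothesis on the reciprocals, $p(x)=\infty$ holds exactly when $q(x)=r(x)=\infty$, so $\Omega_\infty\subseteq\{q(\cdot)=\infty\}\cap\{r(\cdot)=\infty\}$; writing $S_f=\|f\|_{L^\infty(\{q(\cdot)=\infty\},\nu)}$ and $S_g=\|g\|_{L^\infty(\{r(\cdot)=\infty\},\nu)}$, the normalizations give $S_f,S_g\le 1$, whence $\|fg/2\|_{L^\infty(\Omega_\infty,\nu)}\le \tfrac12 S_fS_g\le \tfrac12 S_f$. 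Collecting the pieces,
\[
\rho_{p(\cdot),\nu}(fg/2)\le \tfrac12 I_f+\tfrac12 I_g+\tfrac12 S_fS_g\le \tfrac12(I_f+S_f)+\tfrac12 I_g\le \tfrac12\rho_{q(\cdot),\nu}(f)+\tfrac12\rho_{r(\cdot),\nu}(g)\le 1,
\]
which is the required estimate. The one step to verify carefully is that the same index sets are used consistently in $I_f,S_f$ and in $I_g,S_g$, so that the two budgets $\rho_{q(\cdot),\nu}(f)=I_f+S_f\le 1$ and $\rho_{r(\cdot),\nu}(g)\ge I_g$ are each charged only once; the product $S_fS_g\le S_f$ is exactly what lets the $L^\infty$ contribution be folded into the first budget rather than counted separately.
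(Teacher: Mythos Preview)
Your argument is correct: the reduction to normalized $f,g$, the pointwise Young inequality on $\{q,r<\infty\}$, the handling of the mixed sets via $|f|\le 1$ (resp.\ $|g|\le 1$) on $\{q=\infty\}$ (resp.\ $\{r=\infty\}$), and the bookkeeping that folds the $L^\infty$ contribution $S_fS_g\le S_f$ into the budget $\rho_{q(\cdot),\nu}(f)=I_f+S_f\le 1$ all go through, yielding $\rho_{p(\cdot),\nu}(fg/2)\le 1$ and hence the stated bound with constant $2$. The paper itself does not give a proof but simply refers to Lemma~3.2.20 of \cite{LibroDenHarjHas}; the argument you wrote is precisely the standard modular/Young-inequality proof found there, so your approach coincides with the one the paper implicitly invokes.
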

\begin{proof} See Lemma 3.2.20 in \cite{LibroDenHarjHas}.
\end{proof}
\begin{teo}
(Minkowski's integral inequality for variable Lebesgue spaces) Given $\mu$ and $\nu$ complete $\sigma$-finite measures on $X$ and $Y$ respectively, $p\in \mathcal{P}(X,\mu)$. Let $f:X\times Y\rightarrow \overline{\mathbb{R}}$ measurable with respect to the product measure on $X\times Y$, such that for almost every $y\in Y$, $f(\cdot,y)\in L^{p(\cdot)}(X,\mu)$. Then
\begin{equation}\label{integralMinkowski}
\left\|\int_{Y}f(\cdot,y)d\nu(y)\right\|_{p(\cdot),\mu}\leq C\int_{Y}\|f(\cdot,y)\|_{p(\cdot),\mu}d\nu(y)
\end{equation}
\end{teo}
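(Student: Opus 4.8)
The plan is to reduce the variable-exponent statement to the classical Tonelli theorem and the ordinary scalar Hölder inequality by passing to the associate norm, exactly as one does in the constant-exponent case. Write $F(x)=\int_{Y}f(x,y)\,d\nu(y)$. We may assume the right-hand side of \eqref{integralMinkowski} is finite, for otherwise there is nothing to prove; in particular $F$ is then well defined for $\mu$-a.e. $x$. By the norm conjugate formula \eqref{normaconjugada} we have $\|F\|_{p(\cdot),\mu}\leq 2\,\|F\|^{'}_{p(\cdot),\mu}$, so it suffices to bound the associate norm $\|F\|^{'}_{p(\cdot),\mu}$.

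Fix any $g\in L^{p'(\cdot)}(X,\mu)$ with $\|g\|_{p'(\cdot),\mu}\leq 1$. Then
\begin{equation*}
\int_{X}|F(x)|\,|g(x)|\,d\mu(x)\leq \int_{X}\left(\int_{Y}|f(x,y)|\,d\nu(y)\right)|g(x)|\,d\mu(x)=\int_{Y}\left(\int_{X}|f(x,y)|\,|g(x)|\,d\mu(x)\right)d\nu(y),
\end{equation*}
where the interchange of the order of integration is justified by Tonelli's theorem, using the $\sigma$-finiteness of $\mu$ and $\nu$ together with the joint measurability of $(x,y)\mapsto |f(x,y)|\,|g(x)|$.

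To the inner integral I apply Hölder's inequality \eqref{Holder generalizada} with the pair of exponents $p(\cdot)$ and $p'(\cdot)$, taking the target exponent identically equal to $1$ so that the corresponding variable Lebesgue norm reduces to the plain integral: for $\nu$-a.e. $y$,
\begin{equation*}
\int_{X}|f(x,y)|\,|g(x)|\,d\mu(x)\leq 2\,\|f(\cdot,y)\|_{p(\cdot),\mu}\,\|g\|_{p'(\cdot),\mu}\leq 2\,\|f(\cdot,y)\|_{p(\cdot),\mu}.
\end{equation*}
Substituting this bound and taking the supremum over all admissible $g$ gives $\|F\|^{'}_{p(\cdot),\mu}\leq 2\int_{Y}\|f(\cdot,y)\|_{p(\cdot),\mu}\,d\nu(y)$, whence $\|F\|_{p(\cdot),\mu}\leq 4\int_{Y}\|f(\cdot,y)\|_{p(\cdot),\mu}\,d\nu(y)$, establishing the claim with $C=4$. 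Note that by routing everything through the associate norm and the two black-box theorems, the part of the modular supported on $\Omega_{\infty}$ (where $p=\infty$) never needs separate treatment.

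The analytic chain above is routine; the point requiring genuine care is the measurability bookkeeping. One must verify that $y\mapsto \|f(\cdot,y)\|_{p(\cdot),\mu}$ is $\nu$-measurable, so that the right-hand integral is meaningful, and that $|f(x,y)|\,|g(x)|$ is measurable on the product $X\times Y$, so that Tonelli's theorem genuinely applies. Both facts follow from the assumed joint measurability of $f$ and the $\sigma$-finiteness hypotheses, but these are precisely the places where those hypotheses are used, and I expect this to be the main obstacle rather than the estimates, which are immediate once the associate-norm reformulation is in place.
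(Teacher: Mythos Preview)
Your proof is correct and follows essentially the same route as the paper: the paper's proof simply refers to Corollary~2.38 in \cite{dcruz}, replacing Lebesgue measure by the given $\sigma$-finite measures and invoking H\"older's inequality~\eqref{Holder generalizada}, Fubini's theorem, and the norm conjugate formula~\eqref{normaconjugada}---exactly the duality-plus-Tonelli argument you wrote out. Your explicit constant $C=4$ and your remarks on the measurability bookkeeping are appropriate elaborations of what the paper leaves implicit.
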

\begin{proof} It is completely analogous to the proof of Corollary 2.38 in \cite{dcruz} by interchanging the Lebesgue measure for complete $\sigma$-finite measures $\mu$ and $\nu$ on $X$ and $Y$ respectively, and by using (\ref{Holder generalizada}), Fubini's theorem and then (\ref{normaconjugada}).
\end{proof}

In what follows $\mu$ represents the Haar measure $\displaystyle\mu(dt)=\frac{dt}{t}$ on $\mathbb{R}^{+}$.\\
\begin{obs}
For a $\mu$-measurable function $f:\mathbb{R}^{+}\rightarrow \overline{\mathbb{R}}$, $q(\cdot)\in\mathcal{P}(\mathbb{R}^{+},\mu)$, and any $\lambda>0$
\begin{eqnarray*}
\rho_{q(\cdot),\mu}\left(\frac{f}{\lambda}\right)&=&\displaystyle\int_{0}^{\infty}\left|\frac{f(t)}{\lambda}\right|^{q(t)}\mu(dt)=\displaystyle\int_{0}^{\infty}\left|\frac{t^{-1/q(t)}f(t)}{\lambda}\right|^{q(t)}dt\\
&=&\rho_{q(\cdot)}\left(\frac{t^{-1/q(\cdot)}f}{\lambda}\right)
\end{eqnarray*}
Thus, \begin{equation}\label{normaq,dt/t}
\|f\|_{q(\cdot),\mu}=\|t^{-1/q(\cdot)}f\|_{q(\cdot)}
\end{equation}
\end{obs}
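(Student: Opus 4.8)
The plan is to reduce the identity to a pointwise algebraic manipulation inside the modular, exploiting the explicit form $\mu(dt)=dt/t$ of the Haar measure. The crucial observation is that for every $t>0$ with $q(t)<\infty$ one has the identity
\begin{equation*}
\frac{1}{t}=\left(t^{-1/q(t)}\right)^{q(t)},
\end{equation*}
since $\left(t^{-1/q(t)}\right)^{q(t)}=t^{-1}$. First I would insert this factor into the integrand of the modular: on the set where $q$ is finite,
\begin{equation*}
\left|\frac{f(t)}{\lambda}\right|^{q(t)}\frac{dt}{t}
=\left|\frac{f(t)}{\lambda}\right|^{q(t)}\left|t^{-1/q(t)}\right|^{q(t)}dt
=\left|\frac{t^{-1/q(t)}f(t)}{\lambda}\right|^{q(t)}dt,
\end{equation*}
which is exactly the integrand of the Lebesgue modular $\rho_{q(\cdot)}$ evaluated at $t^{-1/q(\cdot)}f/\lambda$. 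Integrating over $\mathbb{R}^{+}\setminus\Omega_{\infty}$ then identifies the two integral contributions.

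The only point requiring a little care is the behaviour on $\Omega_{\infty}=\{t>0:q(t)=\infty\}$, which enters through the $L^{\infty}$ term in the modular. There two facts are needed. First, on $\Omega_{\infty}$ the multiplier is trivial, $t^{-1/q(t)}=t^{-1/\infty}=t^{0}=1$, so $t^{-1/q(\cdot)}f=f$ there and the arguments of the two modulars agree on $\Omega_{\infty}$. Second, the Haar measure $dt/t$ and the Lebesgue measure $dt$ are mutually absolutely continuous on $(0,\infty)$, because the density $1/t$ is positive and finite for every $t>0$; hence they share the same null sets and
\begin{equation*}
\|f/\lambda\|_{L^{\infty}(\Omega_{\infty},\mu)}=\|f/\lambda\|_{L^{\infty}(\Omega_{\infty},dt)},
\end{equation*}
so the two $L^{\infty}$ contributions coincide as well.

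Combining the two pieces yields the modular identity $\rho_{q(\cdot),\mu}(f/\lambda)=\rho_{q(\cdot)}\!\left(t^{-1/q(\cdot)}f/\lambda\right)$ for every $\lambda>0$. The stated norm identity \eqref{normaq,dt/t} then follows immediately from the definition of the Luxemburg norm as an infimum: since the two modulars agree for all $\lambda>0$, the sets $\{\lambda>0:\rho_{q(\cdot),\mu}(f/\lambda)\le 1\}$ and $\{\lambda>0:\rho_{q(\cdot)}(t^{-1/q(\cdot)}f/\lambda)\le 1\}$ coincide, and therefore so do their infima. I do not expect any genuine obstacle here; the entire content is the factorization $1/t=(t^{-1/q(t)})^{q(t)}$, and the only subtlety is the bookkeeping on $\Omega_{\infty}$, which the displayed computation in the statement silently omits.
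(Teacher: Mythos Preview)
Your proposal is correct and follows exactly the same approach as the paper: the displayed computation in the statement \emph{is} the paper's proof, and it rests on the identity $1/t=(t^{-1/q(t)})^{q(t)}$ together with the definition of the Luxemburg norm. You have actually been more careful than the paper by explicitly handling the $\Omega_{\infty}$ contribution via the mutual absolute continuity of $dt/t$ and $dt$, which the paper's computation silently assumes away.
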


In the case $\Omega=\mathbb{R}^{+}$, we denote by $\mathcal{M}_{0,\infty}$ the set of all measurable functions $p(\cdot): \mathbb{R}^{+}\rightarrow \mathbb{R}^{+} $ which satisfy the following conditions:
\begin{enumerate}
\item[i)] $0\leq p_{-}\leq p_{+} <\infty$.
\item[ii$_0$)] There exists $p(0)=\displaystyle\lim_{x\rightarrow 0}p(x)$ and $|p(x)- p(0)|\leq \frac{A}{\ln(1/x)}, 0< x\leq 1/2$.
\item[ii$_\infty$)] There exists  $p(\infty)=\displaystyle\lim_{x\rightarrow \infty}p(x)$ and $|p(x)- p(\infty)|\leq \frac{A}{\ln(x)},  x>2$.
\end{enumerate}

 We denote  by $\mathcal{P}_{0,\infty}$ the subset of functions $p(\cdot)$ such that $p_{-}\geq 1$.\\
 
 Let $\alpha(\cdot),\beta(\cdot)\in LH(\mathbb{R}^{+})$, bounded with \begin{equation}\label{alpha,p'}
  \displaystyle\alpha(0)<\frac{1}{p'(0)},\; \alpha(\infty)<\frac{1}{p'(\infty)}
  \end{equation} and
  \begin{equation}\label{beta,p}
  \displaystyle\beta(0)>-\frac{1}{p(0)},\; \beta(\infty)>-\frac{1}{p(\infty)}
  \end{equation}
\begin{teo}
Let $p(\cdot)\in\mathcal{P}_{0,\infty}$, $\alpha(\cdot),\beta(\cdot)\in LH(\mathbb{R}^{+})$, bounded. Then the Hardy-type inequalities
 \begin{equation}\label{hardyinequalityvariable0ax}
\left\|x^{\alpha(x)-1}\int_{0}^{x}\frac{f(y)}{y^{\alpha(y)}}dy\right\|_{p(\cdot)}\leq C_{\alpha(\cdot),p(\cdot)}\|f\|_{p(\cdot)}
 \end{equation}
  \begin{equation}\label{hardyinequalityvariablexinfty}
\left\|x^{\beta(x)}\int_{x}^{\infty}\frac{f(y)}{y^{\beta(y)+1}}dy\right\|_{p(\cdot)}\leq C_{\beta(\cdot),p(\cdot)}\|f\|_{p(\cdot)}
 \end{equation}
are valid, if and only if, $\alpha(\cdot),\beta(\cdot)$ satisfy conditions (\ref{alpha,p'}) and (\ref{beta,p})
\end{teo}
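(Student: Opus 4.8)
The two inequalities are dual to one another, so the plan is to prove one of them and transfer to the other via the norm conjugate formula (\ref{normaconjugada}). Write $T_{\alpha}f(x)=x^{\alpha(x)-1}\int_{0}^{x}f(y)y^{-\alpha(y)}\,dy$ and $S_{\beta}f(x)=x^{\beta(x)}\int_{x}^{\infty}f(y)y^{-\beta(y)-1}\,dy$. For nonnegative $f,g$, Tonelli's theorem on the region $0<y<x<\infty$ gives
\[
\int_{0}^{\infty}T_{\alpha}f(x)\,g(x)\,dx=\int_{0}^{\infty}f(y)\,\Big(y^{-\alpha(y)}\int_{y}^{\infty}x^{\alpha(x)-1}g(x)\,dx\Big)\,dy=\int_{0}^{\infty}f(y)\,(S_{-\alpha}g)(y)\,dy,
\]
so that $T_{\alpha}^{\ast}=S_{-\alpha}$. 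By (\ref{normaconjugada}) the boundedness of $T_{\alpha}$ on $L^{p(\cdot)}$ is equivalent to that of $S_{-\alpha}$ on $L^{p'(\cdot)}$, and since $1/p'(0)=1-1/p(0)$, condition (\ref{alpha,p'}) for $T_{\alpha}$ is exactly condition (\ref{beta,p}) written for $S_{-\alpha}$ on $L^{p'(\cdot)}$. As $\mathcal{P}_{0,\infty}$ and $LH(\mathbb{R}^{+})$ are stable under $p\mapsto p'$ (for $p_{-}>1$) and $\beta\mapsto-\beta$, it suffices to establish (\ref{hardyinequalityvariablexinfty}), the endpoint $p_{-}=1$ being treated by the direct estimates below.

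To prove (\ref{hardyinequalityvariablexinfty}) I would first \emph{freeze} the weight exponent. Because $\beta\in LH(\mathbb{R}^{+})$, for $0<x\le 1/2$ one has $|(\beta(x)-\beta(0))\log x|\le C$, so $x^{\beta(x)-\beta(0)}$ is bounded above and below; similarly $x^{\beta(x)-\beta(\infty)}$ is bounded for $x>2$. Splitting $\mathbb{R}^{+}=(0,1]\cup(1,\infty)$ and splitting the inner $y$-integral along the same partition, every factor $x^{\beta(x)}$ and $y^{-\beta(y)}$ may be replaced, up to multiplicative constants, by the constant-exponent powers $x^{\beta(0)},y^{-\beta(0)}$ near $0$ and $x^{\beta(\infty)},y^{-\beta(\infty)}$ near $\infty$. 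This reduces matters to the constant-weight operators $S_{b}f(x)=x^{b}\int_{x}^{\infty}f(y)y^{-b-1}\,dy$ on a half-line with $b=\beta(0)$ or $b=\beta(\infty)$, plus finitely many off-diagonal terms in which $x\in(0,1]$ and $y\in(1,\infty)$; the latter carry a separated kernel and are estimated directly by Hölder's inequality (\ref{Holder generalizada}), converging precisely under (\ref{beta,p}).

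For the constant-weight piece the substitution $y=x/t$ turns $S_{b}$ into the dilation average $S_{b}f(x)=\int_{0}^{1}t^{b-1}f(x/t)\,dt$, and Minkowski's integral inequality (\ref{integralMinkowski}) yields $\|S_{b}f\|_{p(\cdot)}\le C\int_{0}^{1}t^{b-1}\|f(\cdot/t)\|_{p(\cdot)}\,dt$. The crux is to control the dilation norm $\|f(\cdot/t)\|_{p(\cdot)}$: since $L^{p(\cdot)}$ is \emph{not} dilation invariant, this is where the $\mathcal{P}_{0,\infty}$ hypotheses on $p$ (the log-type decay of $p$ at $0$ and $\infty$) are used, to obtain $\|f(\cdot/t)\|_{L^{p(\cdot)}(0,1]}\le C\,t^{1/p(0)}\|f\|_{p(\cdot)}$ near $0$ (and $t^{1/p(\infty)}$ near $\infty$). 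The $t$-integral then converges exactly when $b+1/p(0)>0$, i.e. $\beta(0)>-1/p(0)$, and likewise at infinity; recombining the frozen pieces gives (\ref{hardyinequalityvariablexinfty}), and the duality step gives (\ref{hardyinequalityvariable0ax}).

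Necessity (the ``only if'') is obtained by testing. If, say, $\beta(0)\le -1/p(0)$, then inserting $f_{\varepsilon}(y)=y^{-1/p(0)-\varepsilon}\chi_{(0,1)}(y)$ into (\ref{hardyinequalityvariablexinfty}) and letting $\varepsilon\downarrow 0$ forces the left-hand side to blow up while the right-hand side stays bounded, contradicting the inequality; the cases at $\infty$, and those for $\alpha$ (via $T_{\alpha}^{\ast}=S_{-\alpha}$), are analogous. I expect the main obstacle to be exactly the freezing/dilation estimate of the third paragraph: extracting $\|f(\cdot/t)\|_{p(\cdot)}\le C\,t^{1/p(0)}\|f\|_{p(\cdot)}$ from the log-Hölder decay of $p$, and keeping the error constants uniform in $t$ and across the mixed regions, is the technical heart of the proof, the sharp exponents $1/p(0),1/p(\infty)$ being precisely what produce the thresholds in (\ref{alpha,p'}) and (\ref{beta,p}).
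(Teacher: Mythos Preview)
The paper does not actually prove this theorem: its entire proof is the sentence ``For the proof see Theorem~3.1 and Remark~3.2 in \cite{dieningsamko}.'' So there is no in-paper argument to compare your sketch against; your write-up is in effect an attempt to reconstruct (or re-derive) the Diening--Samko result.

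Your outline is structurally sound: the duality $T_\alpha^\ast=S_{-\alpha}$ is correct and reduces (\ref{hardyinequalityvariable0ax}) to (\ref{hardyinequalityvariablexinfty}), and the freezing step (replacing $x^{\beta(x)}$ by $x^{\beta(0)}$ near $0$ and by $x^{\beta(\infty)}$ near $\infty$ using $\beta\in LH(\mathbb{R}^+)$) is exactly the right first move. The step that is not yet justified is the dilation bound $\|f(\cdot/t)\|_{L^{p(\cdot)}(0,1]}\le C\,t^{1/p(0)}\|f\|_{p(\cdot)}$. This does not follow from $p\in\mathcal{P}_{0,\infty}$ by a one-line argument: on the left, $p$ is sampled at points $x\in(0,t]$ (hence near $p(0)$), while $f$ is evaluated at $x/t\in(0,1]$, where $p$ can be far from $p(0)$; the comparison of the two modulars therefore requires an extra localization of $f$ (or a direct modular/kernel estimate) rather than a bare norm inequality for the dilation. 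You correctly flag this as the crux, but as written the inequality is asserted rather than proved, and it is precisely where the real work in \cite{dieningsamko} lies.

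Your necessity test also needs a sign correction: with $f_\varepsilon(y)=y^{-1/p(0)-\varepsilon}\chi_{(0,1)}$ the \emph{right-hand side} is already infinite (the modular $\int_0^1 y^{(-1/p(0)-\varepsilon)p(y)}\,dy$ diverges because the exponent is $\le -1$ near $0$), so nothing is being tested. The standard choice is $f_\varepsilon(y)=y^{-1/p(0)+\varepsilon}\chi_{(0,1)}$, for which $\|f_\varepsilon\|_{p(\cdot)}\asymp \varepsilon^{-1/p(0)}$, and one then checks that $\|S_\beta f_\varepsilon\|_{p(\cdot)}$ blows up at a strictly faster rate when $\beta(0)\le -1/p(0)$; the comparison of the two rates (not the boundedness of one side alone) yields the ``only if''.
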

\begin{proof} For the proof see Theorem 3.1 and Remark 3.2 in \cite{dieningsamko}.
\end{proof}

As a consequence, we obtain the Hardy inequalities associated to the exponent $q(\cdot)\in\mathcal{P}_{0,\infty}$ and the measure $\mu$.
\begin{cor}
Let $q(\cdot)\in\mathcal{P}_{0,\infty}$ and $r>0$, then
\begin{equation}\label{hardyineq0atr}\displaystyle\left\|t^{-r}\int_{0}^{t}g(y)dy\right\|_{q(\cdot),\mu}\leq C_{r,q(\cdot)}\left\|y^{-r+1}g\right\|_{q(\cdot),\mu}
\end{equation} and
\begin{equation}\label{hardyineqtainftyr}\displaystyle\left\|t^{r}\int_{t}^{\infty}g(y)dy\right\|_{q(\cdot),\mu}\leq C_{r,q(\cdot)}\left\|y^{r+1}g\right\|_{q(\cdot),\mu}
\end{equation}
\end{cor}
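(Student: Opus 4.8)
The plan is to deduce both inequalities from the variable Hardy inequalities (\ref{hardyinequalityvariable0ax}) and (\ref{hardyinequalityvariablexinfty}) applied with $p(\cdot)=q(\cdot)$, transferring the $\mu$-norms to ordinary variable Lebesgue norms via the identity (\ref{normaq,dt/t}). First I would rewrite the two sides of (\ref{hardyineq0atr}) using (\ref{normaq,dt/t}): the left-hand side becomes $\|t^{-1/q(t)-r}\int_0^t g(y)\,dy\|_{q(\cdot)}$ and the right-hand side becomes $\|y^{1/q'(y)-r}g\|_{q(\cdot)}$, where I have used $1-1/q(y)=1/q'(y)$. This reduces the claim to an ordinary-norm Hardy estimate, which I match to (\ref{hardyinequalityvariable0ax}) by choosing the exponent $\alpha(x)=1/q'(x)-r$ and the test function $f(y)=y^{\alpha(y)}g(y)=y^{1/q'(y)-r}g(y)$. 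With these choices the integrand $f(y)/y^{\alpha(y)}$ collapses to $g(y)$ and the prefactor $x^{\alpha(x)-1}$ equals $x^{-1/q(x)-r}$, so the left-hand side of (\ref{hardyinequalityvariable0ax}) is exactly the transformed left-hand side, while its right-hand side $\|f\|_{q(\cdot)}$ is exactly the transformed right-hand side.

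For the second inequality (\ref{hardyineqtainftyr}) I would proceed symmetrically. Applying (\ref{normaq,dt/t}) turns the two sides into $\|t^{\,r-1/q(t)}\int_t^\infty g(y)\,dy\|_{q(\cdot)}$ and $\|y^{\,r+1-1/q(y)}g\|_{q(\cdot)}$, and I match these to (\ref{hardyinequalityvariablexinfty}) with the choice $\beta(x)=r-1/q(x)$ and $f(y)=y^{\beta(y)+1}g(y)$. Again the integrand $f(y)/y^{\beta(y)+1}$ reduces to $g(y)$, the prefactor $x^{\beta(x)}$ matches, and the bound $\|f\|_{q(\cdot)}$ reproduces the transformed right-hand side.

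To legitimately invoke the theorem I must verify its hypotheses for these choices. That $\alpha(\cdot)=1/q'(\cdot)-r$ and $\beta(\cdot)=r-1/q(\cdot)$ are bounded and belong to $LH(\mathbb{R}^{+})$ follows from $q(\cdot)\in\mathcal{P}_{0,\infty}$: since $1\le q_{-}\le q(x)\le q_{+}<\infty$, the map $t\mapsto 1/t$ is Lipschitz on $[q_{-},q_{+}]$, so the log-H\"older behavior of $q$ at $0$ and $\infty$ encoded in $\mathcal{P}_{0,\infty}$ transfers to $1/q(\cdot)$, and adding or subtracting the constants $r$ and $1$ preserves it. I would then check the sign conditions: for $\alpha$ we have $\alpha(0)=1/q'(0)-r<1/q'(0)$ and $\alpha(\infty)=1/q'(\infty)-r<1/q'(\infty)$, so (\ref{alpha,p'}) holds; for $\beta$ we have $\beta(0)=r-1/q(0)>-1/q(0)$ and $\beta(\infty)=r-1/q(\infty)>-1/q(\infty)$, so (\ref{beta,p}) holds. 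All four conditions reduce precisely to the standing hypothesis $r>0$, so the theorem applies and delivers (\ref{hardyineq0atr}) and (\ref{hardyineqtainftyr}).

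I expect the only genuinely technical point to be the verification that $\alpha(\cdot)$ and $\beta(\cdot)$ lie in $LH(\mathbb{R}^{+})$ and are bounded; the algebraic matching of exponents and the sign conditions are routine once the substitution is in place, and the role of $r>0$ is exactly to give the strict inequalities required by (\ref{alpha,p'}) and (\ref{beta,p}).
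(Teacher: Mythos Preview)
Your proposal is correct and is precisely the intended derivation. The paper does not give its own proof here but refers to \cite{Pinrodurb}; the argument there proceeds exactly as you outline---convert the $\mu$-norms to ordinary $L^{q(\cdot)}$-norms via (\ref{normaq,dt/t}) and apply the Diening--Samko Hardy inequalities (\ref{hardyinequalityvariable0ax}), (\ref{hardyinequalityvariablexinfty}) with $\alpha(x)=1/q'(x)-r$ and $\beta(x)=r-1/q(x)$, the sign conditions (\ref{alpha,p'}) and (\ref{beta,p}) reducing exactly to $r>0$.
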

\begin{proof} See \cite{Pinrodurb}.
\end{proof}

In what follows we also need the classical Hardy's inequalities, so for completeness we will write then here, see \cite{st1} page 272,
\begin{equation}\label{hardy1}
\int_{0}^{+\infty}\left(\int_{0}^{x}f(y)dy\right)^{p} x^{-r-1} dx \leq \frac{p}{r}\int_{0}^{+\infty}(y f(y))^{p}y^{-r-1}dy,
\end{equation}
and
\begin{equation}\label{hardy2}
\int_{0}^{+\infty}\left(\int_{x}^{\infty}f(y)dy\right)^{p} x^{r-1}dx \leq \frac{p}{r}\int_{0}^{+\infty}(y f(y))^{p}y^{r-1}dy,
\end{equation}
 where $f\geq 0, p\geq 1$ and $r>0.$\\
 
 We will consider only Lebesgue variable spaces with respect to the Gaussian measure $\gamma_d,$ $L^{p(\cdot)}(\mathbb{R}^{d},\gamma_d).$  The next condition was introduced by E. Dalmasso and R. Scotto in \cite{DalSco}.

\begin{defi}\label{defipgamma}
Let $p(\cdot)\in\mathcal{P}(\mathbb{R}^{d},\gamma_{d})$, we say that $p(\cdot)\in\mathcal{P}_{\gamma_{d}}^{\infty}(\mathbb{R}^{d})$ if there exist constants $C_{\gamma_{d}}>0$ and $p_{\infty}\geq1$ such that
\begin{equation}
   |p(x)-p_{\infty}|\leq\frac{C_{\gamma_{d}}}{\|x\|^{2}},
\end{equation}
for $x\in\mathbb{R}^{d}, x\neq \mathbf{0}$
\end{defi}
\begin{ejm}
  Consider $p(x)=p_{\infty}+\displaystyle\frac{A}{(e+\|x\|)^{q}}$, $x\in\mathbb{R}^{d}$, for any $p_{\infty}\geq 1, A\geq 0$ and $q\geq 2$ then $p(\cdot)\in\mathcal{P}_{\gamma_{d}}^{\infty}(\mathbb{R}^{d})$. 
\end{ejm}
\begin{obs}\label{obs4.1}
If $p(\cdot)\in\mathcal{P}_{\gamma_{d}}^{\infty}(\mathbb{R}^{d})$, then $p(\cdot)\in LH_{\infty}(\mathbb{R}^{d})$
\end{obs}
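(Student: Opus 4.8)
The plan is to verify the defining inequality of $LH_{\infty}(\mathbb{R}^{d})$ directly, taking as base point the origin $x_{0}=\mathbf{0}$ and as limit value the constant $p_{\infty}$ furnished by the hypothesis $p(\cdot)\in\mathcal{P}_{\gamma_{d}}^{\infty}(\mathbb{R}^{d})$. Thus I must produce a constant $C_{2}>0$ with
\[
|p(x)-p_{\infty}|\le\frac{C_{2}}{\log(e+\|x\|)}\qquad(x\in\mathbb{R}^{d}),
\]
where $\|x\|=|x-\mathbf{0}|$. The hypothesis already gives $|p(x)-p_{\infty}|\le C_{\gamma_{d}}/\|x\|^{2}$ for $x\neq\mathbf{0}$, so everything reduces to comparing the rational decay $\|x\|^{-2}$ with the logarithmic decay $1/\log(e+\|x\|)$. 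Since these two rates behave very differently near the origin and at infinity, I would split $\mathbb{R}^{d}$ into the regions $\|x\|\ge 1$ and $\|x\|\le 1$.

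On $\{\|x\|\ge 1\}$ I would write
\[
\frac{C_{\gamma_{d}}}{\|x\|^{2}}=C_{\gamma_{d}}\,\frac{\log(e+\|x\|)}{\|x\|^{2}}\cdot\frac{1}{\log(e+\|x\|)}
\]
and bound the quotient $\varphi(r)=\log(e+r)/r^{2}$ on $[1,\infty)$. A short computation shows $\varphi'(r)<0$ there (its numerator $r/(e+r)-2\log(e+r)$ is negative because $r/(e+r)<1<2\le 2\log(e+r)$ for $r\ge1$), so $\varphi$ is decreasing and $\sup_{r\ge1}\varphi(r)=\varphi(1)=\log(e+1)$; alternatively one gets the same conclusion elementarily from $e+r\le(e+1)r$. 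Hence on this region $|p(x)-p_{\infty}|\le C_{\gamma_{d}}\log(e+1)/\log(e+\|x\|)$.

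On $\{\|x\|\le 1\}$ the Dalmasso--Scotto bound degenerates, so here I would instead invoke that $p(\cdot)$ is a bounded exponent (as in the standing setting; note in any case $|p(x)-p_{\infty}|\le C_{\gamma_{d}}$ off the unit ball). Writing $M=\sup_{x}|p(x)-p_{\infty}|<\infty$ and using $1=\log(e)\le\log(e+\|x\|)\le\log(e+1)$ for $\|x\|\le1$, we get $|p(x)-p_{\infty}|\le M\le M\log(e+1)/\log(e+\|x\|)$. Taking $C_{2}=\log(e+1)\max\{C_{\gamma_{d}},M\}$ then covers both regions (and the point $x=\mathbf{0}$ falls in the second one), which proves $p(\cdot)\in LH_{\infty}(\mathbb{R}^{d})$. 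The only genuinely delicate point is the behavior near the origin: there the hypothesis says nothing useful and the conclusion rests entirely on the boundedness of $p(\cdot)$, while the actual analytic content — the interplay of the two decay rates — is the elementary estimate on $\varphi$ for $\|x\|\ge1$.
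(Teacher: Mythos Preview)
The paper gives no proof of this observation; it is simply stated and then the text moves on. Your argument is therefore not in competition with anything in the paper, and it is essentially correct: choosing $x_{0}=\mathbf{0}$ and $\alpha_{\infty}=p_{\infty}$, the region $\|x\|\ge 1$ is handled by the elementary bound $\sup_{r\ge 1}\log(e+r)/r^{2}<\infty$, while on $\|x\|\le 1$ you use that $\log(e+\|x\|)\ge 1$ together with a uniform bound on $|p(x)-p_{\infty}|$.

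Your caveat about the origin is well placed and worth emphasizing. The defining inequality of $\mathcal{P}_{\gamma_{d}}^{\infty}(\mathbb{R}^{d})$ says nothing useful for small $\|x\|$, and the right-hand side $C_{2}/\log(e+\|x\|)$ in the $LH_{\infty}$ condition is globally bounded (by $C_{2}$), so the conclusion genuinely requires $p_{+}<\infty$. That finiteness is not part of Definition~\ref{defipgamma} as written, but it is part of the standing hypotheses in every theorem of the paper, so your appeal to ``the standing setting'' is the right way to close the argument. If you want a self-contained statement, you could phrase it as: if $p(\cdot)\in\mathcal{P}_{\gamma_{d}}^{\infty}(\mathbb{R}^{d})$ and $p_{+}<\infty$, then $p(\cdot)\in LH_{\infty}(\mathbb{R}^{d})$.
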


Additionally, we need some technical results
\begin{lem}\label{corol1} Given $k\in \mathbb{N}$ and $t>0$  then  $\mu^{(1/2)}_t$ the  one-side stable measure on $(0, \infty)$ of order
$1/2$ satisfies
\begin{equation}
 \int_{0}^{+\infty}\left| \frac{\partial^{k}\mu_{t}^{(1/2)}}{\partial
t^{k}}\right|(ds)\leq\frac{C_{k}}{t^{k}}.
\end{equation}
\end{lem}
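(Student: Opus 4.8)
The plan is to observe first that $\mu^{(1/2)}_t$ is absolutely continuous with respect to Lebesgue measure on $(0,\infty)$, with density $g(t,s)=\frac{t}{2\sqrt{\pi}}\,s^{-3/2}e^{-t^2/4s}$, and that for each fixed $t$ the signed measure $\frac{\partial^k\mu^{(1/2)}_t}{\partial t^k}$ is again absolutely continuous, with density $\partial_t^k g(t,\cdot)$. Hence its total variation is exactly $\int_0^\infty|\partial_t^k g(t,s)|\,ds$, and the lemma reduces to the pointwise-in-$t$ estimate $\int_0^\infty|\partial_t^k g(t,s)|\,ds\le C_k\,t^{-k}$. Differentiation under the integral sign is justified by the rapid decay of $g$ and of all its $t$-derivatives as $s\to 0^+$ (because of the factor $e^{-t^2/4s}$) and as $s\to\infty$ (polynomial decay $s^{-3/2}$).

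The key computational step is that $\partial_t^k$ acts only on the factor $t\,e^{-t^2/4s}$, and this reduces to derivatives of a pure Gaussian through the identity $t\,e^{-t^2/4s}=-2s\,\partial_t e^{-t^2/4s}$, so that $\partial_t^k\!\big(t\,e^{-t^2/4s}\big)=-2s\,\partial_t^{k+1}e^{-t^2/4s}$. Writing $u=t/(2\sqrt{s})$ and using $\frac{d^{k+1}}{du^{k+1}}e^{-u^2}=(-1)^{k+1}H_{k+1}(u)\,e^{-u^2}$, where $H_{k+1}$ denotes the Hermite polynomial of degree $k+1$, one obtains the closed form
\[
\partial_t^k g(t,s)=\frac{(-1)^k}{2^{k+1}\sqrt{\pi}}\,s^{-(k+2)/2}\,H_{k+1}\!\left(\frac{t}{2\sqrt{s}}\right)e^{-t^2/4s}.
\]

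Finally I would take absolute values and integrate, performing the change of variables $v=t/(2\sqrt{s})$ (equivalently $s=t^2/4v^2$). All powers of $t$ collect into a single factor $t^{-k}$ outside the integral, yielding
\[
\int_0^\infty\big|\partial_t^k g(t,s)\big|\,ds=\frac{1}{\sqrt{\pi}\,t^{k}}\int_0^\infty v^{k-1}\,\big|H_{k+1}(v)\big|\,e^{-v^2}\,dv,
\]
so one may take $C_k=\frac{1}{\sqrt{\pi}}\int_0^\infty v^{k-1}|H_{k+1}(v)|e^{-v^2}\,dv$, which is finite since the integrand is a polynomial times a Gaussian (and for $k=0$ the apparent singularity $v^{-1}$ is cancelled by $H_1(v)=2v$). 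Conceptually the $t^{-k}$ scaling is forced in advance by the self-similarity identity $g(\lambda t,\lambda^2 s)=\lambda^{-2}g(t,s)$: differentiating it $k$ times in $t$ gives $(\partial_t^k g)(\lambda t,\lambda^2 s)=\lambda^{-k-2}(\partial_t^k g)(t,s)$, and after the substitution $s\mapsto\lambda^2 s$ one recovers $\int_0^\infty|\partial_t^k g(\lambda t,s)|\,ds=\lambda^{-k}\int_0^\infty|\partial_t^k g(t,s)|\,ds$.

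The only genuine work is the bookkeeping of the powers of $s$ and $t$ through the Hermite reduction and the change of variables; the analytic content, namely the finiteness of $C_k$, is immediate. The one point requiring a careful word is the justification that the total variation of $\frac{\partial^k\mu^{(1/2)}_t}{\partial t^k}$ equals the $L^1$ norm of its density and that $\partial_t^k$ may be interchanged with the integral, but both follow routinely from the Gaussian-times-polynomial decay noted above.
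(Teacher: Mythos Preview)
Your argument is correct. The Hermite--polynomial reduction
\[
\partial_t^k g(t,s)=\frac{(-1)^k}{2^{k+1}\sqrt{\pi}}\,s^{-(k+2)/2}\,H_{k+1}\!\left(\frac{t}{2\sqrt{s}}\right)e^{-t^2/4s}
\]
checks out line by line, and the substitution $v=t/(2\sqrt{s})$ indeed produces the clean factor $t^{-k}$ together with the finite constant $C_k=\pi^{-1/2}\int_0^\infty v^{k-1}|H_{k+1}(v)|e^{-v^2}\,dv$; your remark that $H_1(v)=2v$ cancels the $v^{-1}$ when $k=0$ handles the only endpoint concern. The scaling observation $g(\lambda t,\lambda^2 s)=\lambda^{-2}g(t,s)$ is a nice sanity check that the exponent $-k$ is inevitable.

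As for comparison with the paper: there is nothing to compare. The paper does not prove this lemma but simply refers the reader to inequality~(3.21) of \cite{urbina2019}. Your write-up therefore supplies a genuinely self-contained proof where the paper offers only a citation. The approach you use---rewriting $t\,e^{-t^2/4s}$ as $-2s\,\partial_t e^{-t^2/4s}$ so as to express everything through Gaussian derivatives and hence Hermite polynomials---is the standard and most economical one for this kind of subordinator kernel estimate, and is essentially what one finds in the cited reference as well. One small cosmetic point: your phrase ``differentiation under the integral sign'' is slightly misplaced, since no integral in $s$ is being differentiated here; what you are really justifying is that the signed measure $\partial_t^k\mu_t^{(1/2)}$ is well defined with density $\partial_t^k g(t,\cdot)$, which follows from smoothness of $g$ in $t$ for each fixed $s>0$ together with the uniform integrable bounds you mention.
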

For the proof see inequality (3.21) in \cite{urbina2019}.

\begin{lem}\label{kdecay}
Let $p(\cdot)\in\mathcal{P}_{\gamma_{d}}^{\infty}(\mathbb{R}^{d})\cap LH_{0}(\mathbb{R}^{d})$. Suppose that $f\in L^{p(\cdot)}(\gamma_d)$, then for any integer $k$,
$$\displaystyle\left\|\frac{\partial^{k}}{\partial
t^{k}}P_t f\right\|_{p(\cdot),\gamma_{d}}\leq C_{p(\cdot)}\left\|\frac{\partial^{k}}{\partial
s^{k}}P_s f\right\|_{p(\cdot),\gamma_{d}},$$
for any $0<s<t<+\infty$. Moreover,
\begin{equation}\label{kdecayine}
\left\|\frac{\partial^{k}}{\partial t^{k}}P_t f\right\|_{p(\cdot),\gamma_d}\leq
\frac{C_{k,p(\cdot)}}{t^{k}} \|f\|_{p(\cdot),\gamma_d}, \quad t>0 \quad .
\end{equation}
\end{lem}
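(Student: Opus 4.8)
The plan is to treat the two assertions separately, but both reduce to the uniform $L^{p(\cdot)}(\gamma_d)$-boundedness of the underlying semigroups combined with the variable-exponent Minkowski inequality \eqref{integralMinkowski}. The single external ingredient I would invoke is that, under the hypothesis $p(\cdot)\in\mathcal{P}_{\gamma_{d}}^{\infty}(\mathbb{R}^{d})\cap LH_{0}(\mathbb{R}^{d})$, the Ornstein--Uhlenbeck semigroup is bounded on $L^{p(\cdot)}(\gamma_d)$ with a constant independent of the time parameter, namely $\|T_u g\|_{p(\cdot),\gamma_d}\leq C_{p(\cdot)}\|g\|_{p(\cdot),\gamma_d}$ for all $u\geq 0$; this is precisely where the Dalmasso--Scotto condition enters (see \cite{DalSco}). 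From this, using the subordination formula \eqref{poissonrepstable}, the fact that $\mu^{(1/2)}_\tau$ is a probability measure (so $\int_0^\infty \mu^{(1/2)}_\tau(du)=1$), and \eqref{integralMinkowski}, I would first record the uniform bound $\|P_\tau g\|_{p(\cdot),\gamma_d}\leq C_{p(\cdot)}\|g\|_{p(\cdot),\gamma_d}$ valid for every $\tau>0$ with the same constant.

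For the first (monotonicity) inequality I would establish the operator identity
\[
\frac{\partial^{k}}{\partial t^{k}}P_t f = P_{t-s}\Big(\frac{\partial^{k}}{\partial s^{k}}P_s f\Big),\qquad 0<s<t.
\]
On Hermite polynomials this is immediate from \eqref{PHHerm}: differentiating $P_t h_\nu=e^{-t\sqrt{|\nu|}}h_\nu$ in time brings down the factor $(-\sqrt{|\nu|})^{k}$, which does not depend on the base time, so both sides equal $(-1)^{k}|\nu|^{k/2}e^{-t\sqrt{|\nu|}}h_\nu$; one then extends to arbitrary $f\in L^{p(\cdot)}(\gamma_d)$ by linearity together with a density/kernel argument. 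Granting the identity, the claim follows at once by applying the uniform boundedness of $P_{t-s}$ from the previous step with $g=\partial_s^{k}P_s f$.

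For the decay estimate \eqref{kdecayine} I would differentiate the subordination formula \eqref{poissonrepstable} under the integral sign, which is legitimate because the density $\frac{t}{2\sqrt{\pi}}\,\frac{e^{-t^{2}/4s}}{s^{3/2}}$ of $\mu^{(1/2)}_t$ is smooth in $t$ for each $s>0$, obtaining
\[
\frac{\partial^{k}}{\partial t^{k}}P_t f(x)=\int_0^{\infty}T_s f(x)\,\frac{\partial^{k}\mu^{(1/2)}_t}{\partial t^{k}}(ds).
\]
Taking the $L^{p(\cdot)}(\gamma_d)$-norm, applying \eqref{integralMinkowski} against the total-variation measure $\big|\partial_t^{k}\mu^{(1/2)}_t\big|$, then the uniform bound $\|T_s f\|_{p(\cdot),\gamma_d}\leq C_{p(\cdot)}\|f\|_{p(\cdot),\gamma_d}$, and finally Lemma \ref{corol1} to control the mass of the derivative measure, yields
\[
\Big\|\frac{\partial^{k}}{\partial t^{k}}P_t f\Big\|_{p(\cdot),\gamma_d}\leq C_{p(\cdot)}\|f\|_{p(\cdot),\gamma_d}\int_0^{\infty}\Big|\frac{\partial^{k}\mu^{(1/2)}_t}{\partial t^{k}}\Big|(ds)\leq \frac{C_{k,p(\cdot)}}{t^{k}}\|f\|_{p(\cdot),\gamma_d}.
\]

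The main obstacle is not any of these manipulations but the uniform $L^{p(\cdot)}(\gamma_d)$-boundedness of $T_u$ (hence of $P_\tau$), which is the genuine place the hypotheses $\mathcal{P}_{\gamma_{d}}^{\infty}\cap LH_{0}$ are needed and which I would import from the established theory rather than reprove. A secondary, purely technical point is the rigorous justification of the two interchanges used above: differentiation under the integral in the decay estimate, which I would handle by dominated convergence using the Gaussian decay of the kernels, and the passage from the Hermite identity to arbitrary $f\in L^{p(\cdot)}(\gamma_d)$, which I would handle by a density argument.
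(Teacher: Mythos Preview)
The paper does not supply an in-text proof of this lemma; it simply cites \cite{Pinrodurb}. Your argument is correct and is the natural one: the monotonicity statement follows from the semigroup identity $\partial_t^{k}P_t f = P_{t-s}\big(\partial_s^{k}P_s f\big)$ together with the uniform $L^{p(\cdot)}(\gamma_d)$-boundedness of $P_\tau$ (which you correctly deduce from that of $T_u$ via subordination and \eqref{integralMinkowski}), and the decay estimate \eqref{kdecayine} follows by differentiating \eqref{poissonrepstable} under the integral sign and invoking Lemma~\ref{corol1}. This is exactly the mechanism the paper itself exploits elsewhere (see the proof of Theorem~2.2, where \eqref{ineqU} is derived in the same way), so your approach is fully consistent with the surrounding text and almost certainly matches the argument in the cited reference. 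One minor bibliographic point: within this paper the $L^{p(\cdot)}(\gamma_d)$-boundedness of $T_u$ and $P_t$ is attributed to \cite{MorPinUrb} rather than directly to \cite{DalSco}, though the underlying hypothesis is indeed the Dalmasso--Scotto condition.
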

For the proof see \cite{Pinrodurb}.\\

One of the main results in  \cite{Pinrodurb} was the definition of the variable Gaussian Besov-Lipschitz spaces $B_{p(\cdot),q(\cdot)}^{\alpha}(\gamma_d),$ following  \cite{st1} and \cite{gatPinurb}. They were defined as follows:

\begin{defi}
Let $p(\cdot)\in\mathcal{P}_{\gamma_{d}}^{\infty}(\mathbb{R}^{d})\cap LH_{0}(\mathbb{R}^{d})$ and $q(\cdot)\in\mathcal{P}_{0,\infty}$.
Let $\alpha \geq 0$, $k$ the smallest integer greater than
$\alpha$. The variable Gaussian Besov-Lipschitz space
$B_{p(\cdot),q(\cdot)}^{\alpha}(\gamma_d)$ is defined as the set of functions $f \in
L^{p(\cdot)}(\gamma_d)$ such that
\begin{equation}\label{e15}
\left\|t^{k-\alpha} \left\|
\frac{\partial^{k}P_t f}{\partial t^{k}} \right\|_{p(\cdot),\gamma_d}
\right\|_{q(\cdot),\mu}  < \infty,
\end{equation}
the norm of $f \in B_{p(\cdot),q(\cdot)}^{\alpha}(\gamma_d)$ is defined as
\begin{equation}
\left\| f \right\|_{B_{p(\cdot),q(\cdot)}^{\alpha}}: =  \left\| f \right\|_{p(\cdot),
\gamma_d} +\left\|t^{k-\alpha} \left\|
\frac{\partial^{k}   P_t f}{\partial t^{k}} \right\|_{p(\cdot),\gamma_d}\right\|_{q(\cdot),\mu}.
\end{equation}

 The  variable Gaussian Besov-Lipschitz space
$B_{p(\cdot),\infty}^{\alpha}(\gamma_d)$ is defined as the set of functions $f \in
L^{p(\cdot)}(\gamma_d)$ for which there exists a constant $A$ such that
$$\left\|\frac{\partial^{k}P_t f}{\partial
t^{k}}\right\|_{p(\cdot),\gamma_d}\leq At^{-k+\alpha}, \forall t>0$$ and then the norm of
$f \in B_{p(\cdot),\infty}^{\alpha}(\gamma_d)$ is defined as
\begin{equation}
\left\| f \right\|_{B_{p(\cdot),\infty}^{\alpha}}: =  \left\| f
\right\|_{p(\cdot),\gamma_d} +A_{k}(f),
\end{equation}
where $A_{k}(f)$ is the smallest constant $A$ in
the above inequality.

\end{defi}

For more details about the definition of  variable Gaussian Besov-Lipschitz spaces, we refer to  \cite{Pinrodurb}.\\

Additionally in \cite{Pinrodurb} we obtained some inclusion relations between variable  Gaussian 
Besov-Lipschitz spaces. These  results are analogous to Proposition 10, page 153 in \cite{st1}, see  also \cite{Pinurb} or Proposition 7.36 in \cite{urbina2019}.
\begin{prop} \label{incluBesov}
Let $p(\cdot)\in\mathcal{P}_{\gamma_{d}}^{\infty}(\mathbb{R}^{d})\cap LH_{0}(\mathbb{R}^{d})$ and $q_{1}(\cdot),q_{2}(\cdot)\in\mathcal{P}_{0,\infty}$. The inclusion $B_{p(\cdot),q_1(\cdot)}^{\alpha_{1}}(\gamma_d)\subset
B_{p(\cdot),q_2(\cdot)}^{\alpha_{2}}(\gamma_d)$ holds if:
\begin{enumerate}
\item [i)]   $\alpha_{1}>\alpha_{2}>0$ ( $q_{1}(\cdot)$ y $q_{2}(\cdot)$ not need to be related), or
\item[ii)] If $\alpha_{1}=\alpha_{2}$ and
$q_{1}(t)\leq q_{2}(t)\quad a.e.$\\
\end{enumerate}
\end{prop}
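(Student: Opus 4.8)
The plan is to reduce both inclusions to two auxiliary facts about the scalar function $\phi_m(t):=\left\|\partial_t^m P_t f\right\|_{p(\cdot),\gamma_d}$, for an integer $m>\alpha$, and the associated seminorm $N_m^{\alpha,q}(f):=\left\|t^{m-\alpha}\phi_m\right\|_{q(\cdot),\mu}$ (so that the $B^\alpha_{p(\cdot),q(\cdot)}$ seminorm is $N_{k}^{\alpha,q}(f)$ with $k$ the least integer exceeding $\alpha$). The first fact is a change-of-order inequality $N_m^{\alpha,q}(f)\le C\,N_{m+1}^{\alpha,q}(f)$: since $\phi_m(t)\le C t^{-m}\|f\|_{p(\cdot),\gamma_d}\to 0$ by \eqref{kdecayine}, the fundamental theorem of calculus gives $\partial_t^m P_t f=-\int_t^\infty \partial_s^{m+1}P_s f\,ds$, whence Minkowski's integral inequality \eqref{integralMinkowski} yields $\phi_m(t)\le C\int_t^\infty \phi_{m+1}(s)\,ds$; applying the Hardy inequality \eqref{hardyineqtainftyr} with $r=m-\alpha>0$ then gives the claim. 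Iterating, $N_{k_2}^{\alpha_2,q}(f)\le C\,N_{k_1}^{\alpha_2,q}(f)$ whenever $k_1\ge k_2$, which lets me compute any seminorm using a larger integer order.

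The second, and central, fact is an $L^\infty$-type bound: if $\phi$ is almost decreasing (i.e. $\phi(t)\le C_0\phi(s)$ for $0<s<t$, which is exactly the first assertion of Lemma \ref{kdecay}) and $q(\cdot)\in\mathcal{P}_{0,\infty}$, then for every $a\in\mathbb{R}$ one has $\sup_{t>0} t^{a}\phi(t)\le C\,\|t^{a}\phi\|_{q(\cdot),\mu}$. I would prove this by localizing: on each interval $[t_0/2,t_0]$ almost-monotonicity and $t\le t_0$ give $t^a\phi(t)\ge c\,t_0^{a}\phi(t_0)$, and since this interval has fixed $\mu$-measure $\log 2$ and $q(\cdot)\ge 1$, comparing the modular of $t^a\phi/\|t^a\phi\|_{q(\cdot),\mu}$ on this interval against $1$ forces $t_0^{a}\phi(t_0)\le C\|t^a\phi\|_{q(\cdot),\mu}$. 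This is the continuous, variable-exponent incarnation of the embedding $\ell^{q}\hookrightarrow\ell^{\infty}$, and it is the step I expect to require the most care (the interplay of the almost-monotonicity constant with the modular and with $q_-\ge1$). Applied to $\phi=\phi_k$ it shows in particular $B^{\alpha}_{p(\cdot),q(\cdot)}\subset B^{\alpha}_{p(\cdot),\infty}$.

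For case (ii) ($\alpha_1=\alpha_2=\alpha$, hence a common $k$, and $q_1(\cdot)\le q_2(\cdot)$ a.e.), set $g(t)=t^{k-\alpha}\phi_k(t)$ and, by homogeneity, normalize $\|g\|_{q_1(\cdot),\mu}=1$, so $\rho_{q_1(\cdot),\mu}(g)\le1$. The $L^\infty$-bound gives $g(t)\le C_*$ with $C_*\ge1$, so $0\le g/C_*\le 1$ pointwise; since $q_2(t)\ge q_1(t)$ this yields $(g/C_*)^{q_2(t)}\le (g/C_*)^{q_1(t)}$, whence $\rho_{q_2(\cdot),\mu}(g/C_*)\le\rho_{q_1(\cdot),\mu}(g/C_*)\le\rho_{q_1(\cdot),\mu}(g)\le1$ and therefore $\|g\|_{q_2(\cdot),\mu}\le C_*$. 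Undoing the normalization gives $N_k^{\alpha,q_2}(f)\le C_*\,N_k^{\alpha,q_1}(f)$, which together with $\|f\|_{p(\cdot),\gamma_d}\le\|f\|_{B^{\alpha}_{p(\cdot),q_1(\cdot)}}$ proves the inclusion.

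For case (i) ($\alpha_1>\alpha_2>0$, with $k_1\ge k_2$), I would first use the change-of-order inequality to bound the true $B^{\alpha_2}_{p(\cdot),q_2(\cdot)}$ seminorm $N_{k_2}^{\alpha_2,q_2}(f)$ by $N_{k_1}^{\alpha_2,q_2}(f)=\|g_2\|_{q_2(\cdot),\mu}$, where $g_2(t)=t^{k_1-\alpha_2}\phi_{k_1}(t)$. The $L^\infty$-bound applied with $a=k_1-\alpha_1$ gives $M:=\sup_{t>0} t^{k_1-\alpha_1}\phi_{k_1}(t)\le C\,N_{k_1}^{\alpha_1,q_1}(f)$, i.e. $\phi_{k_1}(t)\le M\,t^{-k_1+\alpha_1}$, while \eqref{kdecayine} gives $\phi_{k_1}(t)\le C t^{-k_1}\|f\|_{p(\cdot),\gamma_d}$. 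Using the first bound on $(0,1]$ and the second on $(1,\infty)$ yields the pointwise estimate $g_2(t)\le M\,t^{\alpha_1-\alpha_2}\chi_{(0,1]}(t)+C\|f\|_{p(\cdot),\gamma_d}\,t^{-\alpha_2}\chi_{(1,\infty)}(t)$. Because $\alpha_1-\alpha_2>0$ and $\alpha_2>0$, both power functions on the right lie in $L^{q_2(\cdot)}(\mu)$ (their modulars converge, using $q_2(\cdot)\ge1$ and \eqref{normaq,dt/t}), so taking $\|\cdot\|_{q_2(\cdot),\mu}$ gives $\|g_2\|_{q_2(\cdot),\mu}\le C\big(N_{k_1}^{\alpha_1,q_1}(f)+\|f\|_{p(\cdot),\gamma_d}\big)=C\|f\|_{B^{\alpha_1}_{p(\cdot),q_1(\cdot)}}$, which completes the proof. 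The two constraints $\alpha_1>\alpha_2$ and $\alpha_2>0$ enter precisely as the convergence conditions of these two integrals at $0$ and at $\infty$ respectively, which is why $q_1(\cdot)$ and $q_2(\cdot)$ need not be related here.
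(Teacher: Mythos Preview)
The paper does not actually prove this proposition: it is quoted from the authors' companion preprint \cite{Pinrodurb}, with the remark that the result is analogous to Proposition~10 on p.~153 of Stein \cite{st1} (see also \cite{Pinurb} and Proposition~7.36 of \cite{urbina2019}). So there is no in-paper proof to compare against directly.

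That said, your argument is correct and is essentially the standard route for these embeddings, transported to the variable-exponent setting. The two ingredients you isolate are exactly the right ones. The change-of-order inequality $N_m^{\alpha,q}(f)\le C\,N_{m+1}^{\alpha,q}(f)$, obtained from $\partial_t^m P_t f=-\int_t^\infty \partial_s^{m+1}P_s f\,ds$ together with Minkowski \eqref{integralMinkowski} and the Hardy inequality \eqref{hardyineqtainftyr} with $r=m-\alpha>0$, is precisely the device used (here and in \cite{Pinrodurb}) to show the Besov norm is independent of the integer $k>\alpha$. The embedding $B^{\alpha}_{p(\cdot),q(\cdot)}\subset B^{\alpha}_{p(\cdot),\infty}$ via your localized $L^\infty$ bound is the genuinely variable-exponent step, and your proof on the dyadic interval $[t_0/2,t_0]$ is sound: almost-monotonicity of $\phi_k$ from Lemma~\ref{kdecay} gives $t^{a}\phi_k(t)\ge c\,t_0^{a}\phi_k(t_0)$ there, and since $\mu([t_0/2,t_0])=\log 2$ and $(q_2)_-\ge 1$ (so $M^{q(t)}\ge M$ whenever $M\ge 1$), the modular constraint $\rho_{q(\cdot),\mu}\big(t^a\phi_k/\|t^a\phi_k\|_{q(\cdot),\mu}\big)\le 1$ forces the desired bound.

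Your handling of case~(ii) by normalizing and comparing modulars pointwise (using $0\le g/C_*\le 1$ and $q_2\ge q_1$) is the standard proof that $L^{q_1(\cdot)}\hookrightarrow L^{q_2(\cdot)}$ once one has an $L^\infty$ control; your handling of case~(i) by splitting at $t=1$ and using $M\,t^{\alpha_1-\alpha_2}$ on $(0,1]$ and $C\|f\|_{p(\cdot),\gamma_d}\,t^{-\alpha_2}$ on $(1,\infty)$ is a clean direct argument. In the classical treatment one often instead factors case~(i) as $B^{\alpha_1}_{p,q_1}\subset B^{\alpha_1}_{p,\infty}\subset B^{\alpha_2}_{p,1}\subset B^{\alpha_2}_{p,q_2}$, invoking case~(ii) for the last step; your version bypasses the intermediate $q=1$ space but uses the same underlying estimates, so the two approaches are equivalent in substance.
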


Finally, the operators that are going to be considered in this paper are the following:
\begin{itemize}

\item  The Gaussian Bessel Potential of order $\beta>0,$
$\mathcal{J}_\beta$,
is defined formally as
\begin{eqnarray}
\mathcal{J}_\beta= (I+\sqrt{-L})^{-\beta},
\end{eqnarray}
meaning that for the Hermite polynomials we have,
\begin{eqnarray*}
\mathcal{J}_\beta h_\nu(x)=\frac 1{(1+\sqrt{\left|
\nu\right|})^{\beta}}h_\nu(x).
\end{eqnarray*}
Again  by linearity can be extended to any polynomial and Meyer's theorem allows us to extend Bessel Potentials
to a \mbox{continuous} operator on $L^p(\gamma_d),$ $1 < p <
\infty$.  It can be proved that the Bessel potentials can be
represented as
\begin{equation}\label{Beselrepre}
\mathcal{J}_\beta
f(x)=\frac{1}{\Gamma(\beta)}\int_{0}^{+\infty}t^{\beta}e^{-t}P_{t}f(x)\frac{dt}{t}.
\end{equation}
Moreover, $\mathcal{J}_\beta$
is bounded on $L^{p(\cdot)}(\gamma_d)$, for $p(\cdot)\in\mathcal{P}_{\gamma_{d}}^{\infty}(\mathbb{R}^{d})\cap LH_{0}(\mathbb{R}^{d})$ with $1< p_{-}\leq p_{+} <\infty$. For the proof see \cite{MorPinUrb}.\\

\item The Gaussian Bessel fractional derivative ${\mathcal D}^\beta$,  defined formally for $\beta>0$ as
\[
{\mathcal D}^\beta=(I+\sqrt{-L})^{\beta},
\]
which means that for the Hermite polynomials, we have
\begin{equation}\label{e7}
{\mathcal D}^\beta h_\nu(x)=(1+ \sqrt{\left|
\nu\right|})^{\beta} h_\nu(x),
\end{equation}
Let $k$  be the smallest integer greater than $\beta$ i.e. $ k-1 \leq \beta < k$, then the fractional derivative ${\mathcal D}^\beta$ can be represented  as
\begin{equation}\label{derbesselrep}
{\mathcal D}^\beta f =  \frac{1}{c^k_{\beta}} \int_0^{\infty} t^{-\beta-1} (e^{-t} P_t -I )^k\,  f \, dt,
\end{equation}
where $c^k_{\beta} =\displaystyle\int_0^{\infty} u^{-\beta-1} (e^{-u} -1 )^k du.$\\
\end{itemize}

As usual in what follows $C$ represents a constant that is not necessarily the same in each occurrence.

%===================================

\section{Main results}

%============================================

The main results of the paper are the study of the regularity properties of the Gaussian Bessel potentials and the Gaussian Bessel fractional derivatives on  variable Gaussian Besov-Lipschitz spaces. \\

Let us start considering the regularity properties of the Gaussian Bessel potentials. In the following theorem we consider their action on $B_{p(\cdot),\infty}^{\alpha}(\gamma_{d})$  spaces, which is analogous to Theorem 4 in \cite{gatPinurb}.
\begin{teo}
Let $\alpha\geq 0, \beta>0$ then for $p(\cdot)\in\mathcal{P}_{\gamma_{d}}^{\infty}(\mathbb{R}^{d})\cap LH_{0}(\mathbb{R}^{d})$ with $1< p_{-}\leq p_{+} <\infty$.
Then, the Gaussian Bessel potential $\mathcal{J}_{\beta}$ is
bounded from $B_{p(\cdot),\infty}^{\alpha}(\gamma_{d})$ into
$B_{p(\cdot),\infty}^{\alpha+\beta}(\gamma_{d})$.
\end{teo}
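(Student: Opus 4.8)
The plan is to verify the two pieces of the $B_{p(\cdot),\infty}^{\alpha+\beta}(\gamma_d)$ norm of $\mathcal{J}_\beta f$ separately. Since $\mathcal{J}_\beta$ is bounded on $L^{p(\cdot)}(\gamma_d)$ under the stated hypotheses, the term $\|\mathcal{J}_\beta f\|_{p(\cdot),\gamma_d}$ is already controlled by $\|f\|_{p(\cdot),\gamma_d}\le\|f\|_{B_{p(\cdot),\infty}^{\alpha}}$. So the whole matter reduces to estimating $A_m(\mathcal{J}_\beta f)$, where $m$ is the smallest integer greater than $\alpha+\beta$; that is, I must exhibit a constant $A'\le C\|f\|_{B_{p(\cdot),\infty}^{\alpha}}$ with $\|\partial_t^m P_t\mathcal{J}_\beta f\|_{p(\cdot),\gamma_d}\le A' t^{-m+\alpha+\beta}$ for all $t>0$.

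First I would commute $P_t$ past the Bessel representation \eqref{Beselrepre}: using the semigroup identity $P_tP_s=P_{t+s}$ one gets $P_t\mathcal{J}_\beta f=\frac{1}{\Gamma(\beta)}\int_0^\infty s^{\beta-1}e^{-s}P_{t+s}f\,ds$ (the interchange being justified by Fubini together with the decay estimate below). Differentiating $m$ times in $t$ under the integral sign and applying Minkowski's integral inequality \eqref{integralMinkowski} for variable Lebesgue spaces yields
$$\|\partial_t^m P_t\mathcal{J}_\beta f\|_{p(\cdot),\gamma_d}\le\frac{C}{\Gamma(\beta)}\int_0^\infty s^{\beta-1}e^{-s}\big\|\,(\partial_\tau^m P_\tau f)\big|_{\tau=t+s}\big\|_{p(\cdot),\gamma_d}\,ds.$$

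The crucial intermediate step is that the $B^\alpha$ hypothesis, phrased through $k$ derivatives ($k$ the smallest integer $>\alpha$), upgrades to an $m$-derivative bound $\|\partial_\tau^m P_\tau f\|_{p(\cdot),\gamma_d}\le C\|f\|_{B_{p(\cdot),\infty}^{\alpha}}\,\tau^{-m+\alpha}$ for all $\tau>0$. To obtain this I would split $P_\tau=P_{\tau/2}P_{\tau/2}$; since all the operators are functions of $L$ they commute, giving
$$\partial_\tau^m P_\tau f=\big[\partial_s^{m-k}P_s\big]_{s=\tau/2}\big(\partial_r^k P_r f\big|_{r=\tau/2}\big).$$
Applying Lemma \ref{kdecay} with order $m-k\ge0$ to the outer factor and the defining inequality of $B_{p(\cdot),\infty}^{\alpha}$ (with $A=A_k(f)$) to the inner function produces the factor $\tau^{-(m-k)}\cdot\tau^{-k+\alpha}=\tau^{-m+\alpha}$, up to constants. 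Substituting $\tau=t+s$, everything reduces to the scalar estimate
$$\int_0^\infty s^{\beta-1}e^{-s}(t+s)^{\alpha-m}\,ds\le C\,t^{\alpha+\beta-m},$$
which I would check in two regimes after setting $\gamma=m-\alpha$ (so $\gamma>\beta>0$ since $m>\alpha+\beta$): for $t\le1$ the change of variable $s=t\sigma$ factors out $t^{\beta-\gamma}$ and leaves the convergent Beta integral $\int_0^\infty\sigma^{\beta-1}(1+\sigma)^{-\gamma}\,d\sigma=B(\beta,\gamma-\beta)$, while for $t\ge1$ one bounds $(t+s)^{-\gamma}\le t^{-\gamma}\le t^{\beta-\gamma}$ and integrates $s^{\beta-1}e^{-s}$ to $\Gamma(\beta)$.

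I expect the main obstacle to be the derivative-order bridging: the hypothesis only controls $k$ derivatives whereas the target space is defined through $m\ge k$ of them, and reconciling the two is exactly what the semigroup factorization together with Lemma \ref{kdecay} accomplishes; the same decay is what legitimizes differentiating under the integral, so it is natural to establish the intermediate bound first. The closing scalar integral is elementary but genuinely requires the split at $t=1$, because the exponent $\alpha+\beta-m$ is negative and a single crude bound would give the wrong power of $t$. Combining the two regimes yields $A_m(\mathcal{J}_\beta f)\le C\|f\|_{B_{p(\cdot),\infty}^{\alpha}}$, which with the $L^{p(\cdot)}$ bound gives $\|\mathcal{J}_\beta f\|_{B_{p(\cdot),\infty}^{\alpha+\beta}}\le C\|f\|_{B_{p(\cdot),\infty}^{\alpha}}$.
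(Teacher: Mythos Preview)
Your argument is correct and shares the paper's overall architecture: represent $P_t\mathcal{J}_\beta f$ via \eqref{Beselrepre} and the semigroup property, differentiate under the integral, apply Minkowski, then estimate. The execution of the final estimate differs, however. The paper fixes a single integer $k>\alpha+\beta$ from the outset (relying on the fact, proved in \cite{Pinrodurb}, that the $B_{p(\cdot),\infty}^{\alpha}$ condition is independent of the choice of $k>\alpha$) and splits the $s$-integral at $s=t$: on $(0,t)$ it uses the monotonicity part of Lemma~\ref{kdecay} to replace $\|u^{(k)}(\cdot,t+s)\|$ by $\|u^{(k)}(\cdot,t)\|$, and on $(t,\infty)$ it replaces it by $\|u^{(k)}(\cdot,s)\|$; the Besov hypothesis then gives $t^{-k+\alpha}$ and $s^{-k+\alpha}$ respectively, and both pieces integrate to $C\,t^{-k+\alpha+\beta}$. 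Your route instead proves the order-bridging $\|\partial_\tau^{m}P_\tau f\|\le C\tau^{-m+\alpha}$ explicitly via the factorization $P_\tau=P_{\tau/2}P_{\tau/2}$ and the decay estimate \eqref{kdecayine}, and then handles the remaining scalar integral globally. Both work; the paper's split at $s=t$ is a bit cleaner, while your factorization makes the dependence on the original $k$-derivative hypothesis more transparent. One small remark: your split at $t=1$ is not actually needed, since dropping $e^{-s}$ and substituting $s=t\sigma$ gives $\int_0^\infty s^{\beta-1}(t+s)^{-\gamma}\,ds=t^{\beta-\gamma}B(\beta,\gamma-\beta)$ uniformly in $t>0$.
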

\begin{proof}

Let $k>\alpha+\beta$ a fixed integer and $f\in
B_{p(\cdot),\infty}^{\alpha}(\gamma_{d})$, then $\mathcal{J}_{\beta}f\in L^{p(\cdot)}(\gamma_{d})$ \\(see \cite{MorPinUrb}).
By using the representation of Bessel potential (\ref{Beselrepre}) and properties of $P_{t}$, we get
$$P_{t}(\mathcal{J}_{\beta}f)(x)=\displaystyle\frac{1}{\Gamma(\beta)}\int_{0}^{+\infty}s^{\beta}e^{-s}P_{t+s}f(x)\frac{ds}{s},$$
thus using the dominated convergence theorem and chain rule, we obtain
\begin{eqnarray*}
\frac{\partial^{k}}{\partial t^{k}}P_{t}(\mathcal{J}_{\beta}f)(x)&=&\frac{1}{\Gamma(\beta)}\int_{0}^{+\infty}s^{\beta}e^{-s}u^{(k)}(x,t+s)\frac{ds}{s}.
\end{eqnarray*}
 This implies, using Minkowski's integral inequality (\ref{integralMinkowski}), that
\begin{eqnarray*}
\left\|\frac{\partial^{k}}{\partial
t^{k}} P_{t}(\mathcal{J}_{\beta}f)\right\|_{p(\cdot),\gamma_{d}}&\leq&\frac{C}{\Gamma(\beta)}\int_{0}^{+\infty}s^{\beta}e^{-s}\left\|u^{(k)}(\cdot,t+s)\right\|_{p(\cdot),\gamma_{d}}\frac{ds}{s}\\
&=& \frac{C}{\Gamma(\beta)}\int_{0}^{t}s^{\beta}e^{-s}\left\|u^{(k)}(\cdot,t+s)\right\|_{p(\cdot),\gamma_{d}}\frac{ds}{s}\\
&&\quad \quad \quad + \frac{C}{\Gamma(\beta)}\int_{t}^{\infty}s^{\beta}e^{-s}\left\|u^{(k)}(\cdot,t+s)\right\|_{p(\cdot),\gamma_{d}}\frac{ds}{s}\\
&=&(I)+(II).
\end{eqnarray*}
Now, as $\beta>0$, using Lemma \ref{kdecay} (as  $t+s>t$) and since
$f\in B_{p(\cdot),\infty}^{\alpha}(\gamma_{d})$,
\begin{eqnarray*}
(I) &\leq&\frac{C}{\Gamma(\beta)}\left\|\frac{\partial^{k}P_{t}f}{\partial
t^{k}}\right\|_{p(\cdot),\gamma_{d}}\int_{0}^{t}s^{\beta}e^{-s}\frac{ds}{s} \leq \frac{C}{\Gamma(\beta)}\left\|\frac{\partial^{k}P_{t}f}{\partial
t^{k}}\right\|_{p(\cdot),\gamma_{d}}\int_{0}^{t}s^{\beta-1}ds\\
&\leq&\frac{C}{\Gamma(\beta)}\frac{t^{\beta}}{\beta}A_{k}(f)t^{-k+\alpha}
=C_{\beta} A_{k}(f)t^{-k+\alpha+\beta} .
\end{eqnarray*}
On the other hand, as $k>\alpha+\beta$  using Lemma
\ref{kdecay} as $t+s>s$, and since $f\in
B_{p(\cdot),\infty}^{\alpha}(\gamma_{d})$
\begin{eqnarray*}
(II) &\leq&\frac{C}{\Gamma(\beta)}\int_{t}^{\infty}s^{\beta}e^{-s}\left\|\frac{\partial^{k}P_{s}f}{\partial
s^{k}}\right\|_{p(\cdot),\gamma_{d}}\frac{ds}{s}
\leq C\frac{A_{k}(f)}{\Gamma(\beta)}\int_{t}^{\infty}s^{\beta}e^{-s} s^{-k+\alpha}\frac{ds}{s}\\
&\leq& C\frac{A_{k}(f)}{\Gamma(\beta)}\int_{t}^{\infty}s^{-k+\alpha+\beta-1}ds
=C\frac{A_{k}(f)}{\Gamma(\beta)}\frac{t^{-k+\alpha+\beta}}{k-(\alpha+\beta)} = C_{k,\alpha,\beta}A_{k}(f) t^{-k+\alpha+\beta}.
\end{eqnarray*}
Therefore,
\begin{eqnarray*}
\left\|\frac{\partial^{k}}{\partial
t^{k}} P_{t}({\mathcal{J}}_{\beta}f)\right\|_{p(\cdot),\gamma_{d}}&\leq&C A_{k}(f) t^{-k+\alpha+\beta}, \forall t>0.
\end{eqnarray*}
 Then
$\mathcal{J}_{\beta}f\in B_{p(\cdot),\infty}^{\alpha+\beta}(\gamma_{d})$
and $A_{k}(\mathcal{J}_{\beta}f)\leq CA_{k}(f)$. Thus,
\begin{eqnarray*}
\|{\mathcal{J}}_{\beta}f\|_{B_{p(\cdot),\infty}^{\alpha+\beta}}&=&\|{\mathcal{J}}_{\beta}f\|_{p(\cdot),\gamma_{d}}+A_{k}({\mathcal{J}}_{\beta}f)\\
&\leq&C\|f\|_{p(\cdot),\gamma_{d}}+C A_{k}(f) \leq C\|f\|_{B_{p(\cdot),\infty}^{\alpha}}. 
\end{eqnarray*}
\end{proof}

Now, in the following theorem we consider the action of Gaussian Bessel potentials on $B_{p(\cdot),q(\cdot)}^{\alpha}(\gamma_{d})$ spaces. It is analogous to Theorem 2.4 (i) of \cite{Pinurb}.
\begin{teo}
Let $\alpha\geq 0$, $\beta>0$, $p(\cdot)\in\mathcal{P}_{\gamma_{d}}^{\infty}(\mathbb{R}^{d})\cap LH_{0}(\mathbb{R}^{d})$ with $1< p_{-}\leq p_{+} <\infty$ and $q(\cdot)\in\mathcal{P}_{0,\infty}$. Then, the Gaussian Bessel potential
$\mathcal{J}_{\beta}$ is
bounded from $B_{p(\cdot),q(\cdot)}^{\alpha}(\gamma_{d})$ into
$B_{p(\cdot),q(\cdot)}^{\alpha+\beta}(\gamma_{d})$.
\end{teo}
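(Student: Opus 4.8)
The plan is to follow the structure of the preceding theorem, replacing the supremum control coming from the $B^{\alpha}_{p(\cdot),\infty}$ norm by the $\|\cdot\|_{q(\cdot),\mu}$ norm, and using the variable Hardy inequality (\ref{hardyineqtainftyr}) in place of the elementary decay estimate. First I would fix an integer $m$ with $m>\alpha+\beta$; since the Gaussian Besov-Lipschitz seminorm does not depend, up to equivalence, on the particular integer exceeding the smoothness index (see \cite{Pinrodurb}), this single $m$ may be used simultaneously to compute the seminorm of $f$ in $B^{\alpha}_{p(\cdot),q(\cdot)}(\gamma_d)$ and the seminorm of $\mathcal{J}_\beta f$ in $B^{\alpha+\beta}_{p(\cdot),q(\cdot)}(\gamma_d)$. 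The $L^{p(\cdot)}$ part of the target norm is immediate: $\|\mathcal{J}_\beta f\|_{p(\cdot),\gamma_d}\leq C\|f\|_{p(\cdot),\gamma_d}$ by the boundedness of $\mathcal{J}_\beta$ on $L^{p(\cdot)}(\gamma_d)$ established in \cite{MorPinUrb}, so the whole matter reduces to controlling the seminorm.

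Starting from the representation (\ref{Beselrepre}) and differentiating under the integral sign as in the previous proof, I would write
\[
\frac{\partial^{m}}{\partial t^{m}}P_{t}(\mathcal{J}_{\beta}f)(x)=\frac{1}{\Gamma(\beta)}\int_{0}^{+\infty}s^{\beta}e^{-s}u^{(m)}(x,t+s)\frac{ds}{s},
\]
then apply Minkowski's integral inequality (\ref{integralMinkowski}) followed by the splitting $\int_0^{+\infty}=\int_0^{t}+\int_t^{+\infty}=(I)+(II)$, exactly as before but now keeping $\|u^{(m)}(\cdot,t+s)\|_{p(\cdot),\gamma_d}$ inside the integrals rather than bounding it by a fixed power of $t$.

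For the near-diagonal term $(I)$, since $t+s>t$, Lemma \ref{kdecay} gives $\|u^{(m)}(\cdot,t+s)\|_{p(\cdot),\gamma_d}\leq C\|u^{(m)}(\cdot,t)\|_{p(\cdot),\gamma_d}$, so that $(I)\leq C_\beta\, t^{\beta}\|\partial^m_t P_t f\|_{p(\cdot),\gamma_d}$; multiplying by $t^{m-(\alpha+\beta)}$ and taking $\|\cdot\|_{q(\cdot),\mu}$ produces exactly $C\,\|t^{m-\alpha}\|\partial^m_t P_t f\|_{p(\cdot),\gamma_d}\|_{q(\cdot),\mu}$, i.e. the seminorm of $f$. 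For the tail term $(II)$, using $t+s>s$ in Lemma \ref{kdecay} and $e^{-s}\le 1$, I would bound $(II)\leq C\int_t^{\infty}s^{\beta-1}\|\partial^m_s P_s f\|_{p(\cdot),\gamma_d}\,ds$; after multiplying by $t^{m-(\alpha+\beta)}$ this is precisely of the form to which (\ref{hardyineqtainftyr}) applies, with $r=m-(\alpha+\beta)>0$ and $g(s)=s^{\beta-1}\|\partial^m_s P_s f\|_{p(\cdot),\gamma_d}$ (here $q(\cdot)\in\mathcal{P}_{0,\infty}$ is exactly what the inequality requires). The resulting right-hand side carries the weight $y^{r+1+\beta-1}=y^{m-\alpha}$, so it again equals $C\,\|t^{m-\alpha}\|\partial^m_t P_t f\|_{p(\cdot),\gamma_d}\|_{q(\cdot),\mu}$. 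Adding the two contributions and the $L^{p(\cdot)}$ estimate yields $\|\mathcal{J}_\beta f\|_{B^{\alpha+\beta}_{p(\cdot),q(\cdot)}}\leq C\|f\|_{B^{\alpha}_{p(\cdot),q(\cdot)}}$.

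The routine parts (differentiation under the integral, Minkowski, the elementary integral $\int_0^t s^{\beta-1}\,ds=t^\beta/\beta$) go through verbatim. The step I expect to be the main obstacle, or at least the one needing the most care, is the bookkeeping of the exponents so that the Hardy inequality (\ref{hardyineqtainftyr}) returns exactly the weight $t^{m-\alpha}$ associated to the $B^{\alpha}_{p(\cdot),q(\cdot)}$ seminorm, together with the justification that the seminorm computed with the single integer $m>\alpha+\beta$ is equivalent to the defining one (with the smallest integer above the smoothness index) for both $f$ and $\mathcal{J}_\beta f$; this independence of the integer is the analytic fact on which the whole argument hinges and must be invoked from \cite{Pinrodurb}.
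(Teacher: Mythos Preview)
Your argument is correct, and in fact it is more streamlined than the paper's own proof. The paper does not work directly with a single integer $m>\alpha+\beta$ and the representation (\ref{Beselrepre}); instead it passes through the subordination formula (\ref{poissonrepstable}), writes $P_{t_1+t_2}(\mathcal{J}_\beta f)=\int_0^\infty T_s(P_{t_2}(\mathcal{J}_\beta f))\,\mu_{t_1}^{(1/2)}(ds)$, differentiates $k$ times in $t_2$ and $l$ times in $t_1$ (with $k>\alpha+\beta$, $l>\beta$), and uses Lemma~\ref{corol1} to gain a factor $(t_1)^{-l}$ before invoking (\ref{Beselrepre}) and the split $\int_0^t+\int_t^\infty$. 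After setting $t_1=t_2=t/2$ the endgame is identical to yours: Lemma~\ref{kdecay} on the inner piece and the Hardy inequality (\ref{hardyineqtainftyr}) on the tail, both landing on $\|t^{k-\alpha}\|\partial_t^k P_t f\|_{p(\cdot),\gamma_d}\|_{q(\cdot),\mu}$.

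The practical difference is that your route avoids Lemma~\ref{corol1} and the auxiliary index $l$ altogether, at the cost of leaning explicitly on the independence-of-the-integer result from \cite{Pinrodurb} (which the paper also uses implicitly, since its $k>\alpha+\beta$ is not the smallest integer above $\alpha$). Your exponent bookkeeping in the Hardy step is correct: with $r=m-(\alpha+\beta)>0$ and $g(s)=s^{\beta-1}\|\partial_s^m P_s f\|_{p(\cdot),\gamma_d}$, the right-hand side of (\ref{hardyineqtainftyr}) carries the weight $s^{r+1}\cdot s^{\beta-1}=s^{m-\alpha}$, exactly what is needed.
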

\begin{proof} Let $f\in B_{p(\cdot),q(\cdot)}^{\alpha}(\gamma_{d})$  then $\mathcal{J}_{\beta}f\in L^{p(\cdot)}(\gamma_{d})$ since $\mathcal{J}_{\beta}$ is bounded on $L^{p(\cdot)}(\gamma_{d})$.\\
Let denote $u(x,t)=P_{t}f(x)$ and
$U(x,t)=P_{t}{\mathcal{J}}_{\beta}f(x)$. Using the representation (\ref{poissonrepstable}) of $P_t$, we have
$$U(x,t)=\displaystyle\int_{0}^{+\infty}T_{s}({\mathcal{J}}_{\beta}f)(x)\mu_{t}^{(1/2)}(ds) \quad .$$
Thus, by the semigroup's property of $P_{t}$
$$U(x,t_{1}+t_{2})=P_{t_{1}}(P_{t_{2}}({\mathcal{J}}_{\beta}f))(x)=
\displaystyle\int_{0}^{+\infty}T_{s}(P_{t_{2}}({\mathcal{J}}_{\beta}f))(x)\mu_{t_{1}}^{\frac{1}{2}}(ds).$$
Now, fix $k$ and $l$ integer greater than $\alpha$ and $\beta$
respectively. By using the dominated convergence theorem, differentiating $k$ times respect to $t_{2}$ and $l$ times
respect to $t_{1}$ we get

$$\displaystyle\frac{\partial^{k+l}U(x,t_{1}+t_{2})}{\partial
(t_{1}+t_{2})^{k+l}}=\int_{0}^{+\infty}T_{s}
(\frac{\partial^{k}P_{t_{2}}}{\partial
t_{2}^{k}}({\mathcal{J}}_{\beta}f))(x)\frac{\partial^{l}}{\partial
t_{1}^{l}}\mu_{t_{1}}^{\frac{1}{2}}(ds).$$ Thus, making $t=t_{1}+t_{2}$, we get

$$\displaystyle\frac{\partial^{k+l} U(x,t)}{\partial
t^{k+l}}=\int_{0}^{+\infty}T_{s}(\frac{\partial^{k}P_{t_{2}}}{\partial
t_{2}^{k}}({\mathcal{J}}_{\beta}f))(x)\frac{\partial^{l}}{\partial
t_{1}^{l}}\mu_{t_{1}}^{\frac{1}{2}}(ds),$$
therefore, by using Minkowski's integral inequality (\ref{integralMinkowski}), the $L^{p(\cdot)}$-continuity of $T_s$ and Lemma \ref{corol1}
\begin{eqnarray} \label{ineqU}
\nonumber \left\|\frac{\partial^{k+l} U(\cdot,t)}{\partial
t^{k+l}}\right\|_{p(\cdot),\gamma_{d}}&\leq&C
\int_{0}^{+\infty} \left\|T_{s} (\frac{\partial^{k}P_{t_{2}}}{\partial t_{2}^{k}}({\mathcal{J}}_{\beta}f))\right\|_{p(\cdot),\gamma_{d}} \left|\frac{\partial^{l}}{\partial t_{1}^{l}} \mu_{t_{1}}^{\frac{1}{2}}(ds)\right|\\
\nonumber &\leq&C\int_{0}^{+\infty} \left\|
\frac{\partial^{k}P_{t_{2}}}{\partial
t_{2}^{k}}({\mathcal{J}}_{\beta}f)\right\|_{p(\cdot),\gamma_{d}}\left|\frac{\partial^{l}}{\partial
t_{1}^{l}} \mu_{t_{1}}^{\frac{1}{2}}(ds)\right|\\
\nonumber &=&C \left\|\frac{\partial^{k}P_{t_{2}}}{\partial t_{2}^{k}}({\mathcal{J}}_{\beta}f)\right\|_{p(\cdot),\gamma_{d}}\int_{0}^{+\infty} \left|\frac{\partial^{l}}{\partial t_{1}^{l}} \mu_{t_{1}}^{\frac{1}{2}}(ds)\right|\\
&\leq& C (t_{1})^{-l}  \left\|\frac{\partial^{k}}{\partial
t_{2}^{k}}P_{t_{2}}{\mathcal{J}}_{\beta}f\right\|_{p(\cdot),\gamma_{d}} \quad .
\end{eqnarray}

On the other hand, using the representation of the Bessel potential
(\ref{Beselrepre}), we have

$$P_{t}({\mathcal{J}}_{\beta}f)(x)=\displaystyle\frac{1}{\Gamma(\beta)}\int_{0}^{+\infty}s^{\beta}e^{-s}P_{t+s}f(x)\frac{ds}{s}$$

Thus,
\begin{eqnarray*}
\frac{\partial^{k}P_{t}}{\partial t^{k}}({\mathcal{J}}_{\beta}f)(x)&=&\frac{1}{\Gamma(\beta)}\int_{0}^{+\infty}s^{\beta}e^{-s}\frac{\partial^{k} P_{t+s}f(x)}{\partial t^{k}}\frac{ds}{s}=\frac{1}{\Gamma(\beta)}\int_{0}^{+\infty}s^{\beta}e^{-s}\frac{\partial^{k}
P_{t+s}f(x)}{\partial (t+s)^{k}}\frac{ds}{s},
\end{eqnarray*}
and again by Minkowski's integral inequality (\ref{integralMinkowski})
\begin{eqnarray*}
\left\|\frac{\partial^{k} P_{t}}{\partial
t^{k}}({\mathcal{J}}_{\beta}f)\right\|_{p(\cdot),\gamma_{d}}&\leq&\frac{C}{\Gamma(\beta)}\int_{0}^{+\infty}s^{\beta}e^{-s}\left\|\frac{\partial^{k}P_{t+s}f}{\partial
(t+s)^{k}}\right\|_{p(\cdot),\gamma_{d}}\frac{ds}{s}.
\end{eqnarray*}
 Now, since the definition of $B_{p(\cdot),q(\cdot)}^{\alpha}(\gamma_{d})$ is
independent of the integer $k>\alpha$ that we choose, take $k>\alpha+\beta$ and $l>\beta$, then
$k+l>\alpha+2\beta>\alpha+\beta$, this is, $k+l$ is an integer greater than $\alpha+\beta$. Now we will show that
$$\left\|t^{k+l-(\alpha+\beta)}\left\|\frac{\partial^{k+l} U(\cdot,t)}{\partial
t^{k+l}}\right\|_{p(\cdot),\gamma_{d}}\right\|_{q(\cdot),\mu}<+\infty \quad .$$
 In fact, taking $t_1=t_2 = t/2$ in (\ref{ineqU}), we get
\begin{eqnarray*}
&&\left\|t^{k+l-(\alpha+\beta)}\left\|\frac{\partial^{k+l}
U(\cdot,t)}{\partial
t^{k+l}}\right\|_{p(\cdot),\gamma_{d}}\right\|_{q(\cdot),\mu}
\quad \quad\quad \quad
\quad\\
&&\hspace{2cm}\leq C
\left\|t^{k+l-(\alpha+\beta)}\left\|\frac{\partial^{k}P_{\frac{t}{2}}}{\partial
(\frac{t}{2})^{k}}({\mathcal{J}}_{\beta}f)\right\|_{p(\cdot),\gamma_{d}}(\frac{t}{2})^{-l}\right\|_{q(\cdot),\mu}\\
&&\hspace{2cm}\leq
\frac{C}{\Gamma(\beta)}\left\|t^{k-(\alpha+\beta)}\left(\int_{0}^{+\infty}s^{\beta}e^{-s}
\left\|\frac{\partial^{k}P_{s+\frac{t}{2}}f}{\partial(s+\frac{t}{2})^{k}}\right\|_{p(\cdot),\gamma_{d}}\frac{ds}{s}\right)\right\|_{q(\cdot),\mu}\\
&&\hspace{2cm}=\frac{C}{\Gamma(\beta)}\left\|t^{k-(\alpha+\beta)}\left(\int_{0}^{t}s^{\beta}
\left\|\frac{\partial^{k}P_{s+\frac{t}{2}}f}{\partial(s+\frac{t}{2})^{k}}\right\|_{p(\cdot),\gamma_{d}}\frac{ds}{s}\right)\right.\\
&&\hspace{4cm}+ t^{k-(\alpha+\beta)}\left(\int_{t}^{+\infty}s^{\beta}
\left.\left\|\frac{\partial^{k}P_{s+\frac{t}{2}}f}{\partial(s+\frac{t}{2})^{k}}\right\|_{p(\cdot),\gamma_{d}}\frac{ds}{s}\right)\right\|_{q(\cdot),\mu}\\
&&\hspace{2cm} \leq\frac{C}{\Gamma(\beta)}\left\|t^{k-(\alpha+\beta)}\left(\int_{0}^{t}s^{\beta}
\left\|\frac{\partial^{k}P_{s+\frac{t}{2}}f}{\partial(s+\frac{t}{2})^{k}}\right\|_{p(\cdot),\gamma_{d}}\frac{ds}{s}\right)\right\|_{q(\cdot),\mu}\\
&&\hspace{4cm} +\frac{C}{\Gamma(\beta)}\left\|t^{k-(\alpha+\beta)}
\left(\int_{t}^{+\infty}s^{\beta}
\left\|\frac{\partial^{k}P_{s+\frac{t}{2}}f}{\partial(s+\frac{t}{2})^{k}}\right\|_{p(\cdot),\gamma_{d}}\frac{ds}{s}\right)\right\|_{q(\cdot),\mu}\\
&&\hspace{2cm}=I+II.
\end{eqnarray*}
Using Lemma \ref{kdecay}, the change of variables $u=t/2$ and since $\beta>0$, we have
\begin{eqnarray*}
I&=&\frac{C}{\Gamma(\beta)}\left\|t^{k-(\alpha+\beta)}\left(\int_{0}^{t}s^{\beta}
\left\|\frac{\partial^{k}P_{s+\frac{t}{2}}f}{\partial(s+\frac{t}{2})^{k}}\right\|_{p(\cdot),\gamma_{d}}\frac{ds}{s}\right)\right\|_{q(\cdot),\mu}\\
&\leq&\frac{C}{\Gamma(\beta)}\left\|t^{k-(\alpha+\beta)}\left(\int_{0}^{t}s^{\beta}
\left\|\frac{\partial^{k}P_{\frac{t}{2}}f}{\partial(\frac{t}{2})^{k}}\right\|_{p(\cdot),\gamma_{d}}\frac{ds}{s}\right)\right\|_{q(\cdot),\mu}\\
&=&\frac{C}{\beta\Gamma(\beta)}\left\|t^{k-\alpha}\left\|\frac{\partial^{k}P_{\frac{t}{2}}f}{\partial(\frac{t}{2})^{k}}\right\|_{p(\cdot),\gamma_{d}}\right\|_{q(\cdot),\mu}=C_{k,\alpha,\beta}\left\|u^{k-\alpha}\left\|\frac{\partial^{k}P_{u}f}{\partial
u^{k}}\right\|_{p(\cdot),\gamma_{d}}\right\|_{q(\cdot),\mu}<+\infty,
\end{eqnarray*}
since $f\in B_{p(\cdot),q(\cdot)}^{\alpha}(\gamma_{d})$.\\

On the other hand, using the Hardy's ineguality (\ref{hardyineqtainftyr}), since
$k>\alpha+\beta$ and  again  by Lemma \ref{kdecay}, we get
\begin{eqnarray*}
II&=&\frac{C}{\Gamma(\beta)}\left\|t^{k-(\alpha+\beta)}\left(\int_{t}^{+\infty}s^{\beta}
\left\|\frac{\partial^{k}P_{s+\frac{t}{2}}f}{\partial(s+\frac{t}{2})^{k}}\right\|_{p(\cdot),\gamma_{d}}\frac{ds}{s}\right)\right\|_{q(\cdot),\mu}\\
&\leq&\frac{C}{\Gamma(\beta)}\left\|t^{k-(\alpha+\beta)}\left(\int_{t}^{+\infty}s^{\beta}
\left\|\frac{\partial^{k}P_{s}f}{\partial s^{k}}\right\|_{p(\cdot),\gamma_{d}}\frac{ds}{s}\right)\right\|_{q(\cdot),\mu} \leq C_{k,\alpha,\beta}\left\|s^{k-\alpha}\left\|\frac{\partial^{k}}{\partial
s^{k}}P_{s}f\right\|_{p(\cdot),\gamma_{d}}\right\|_{q(\cdot),\mu}<+\infty,
\end{eqnarray*}
since $f\in B_{p(\cdot),q(\cdot)}^{\alpha}(\gamma_{d})$. This is,
${\mathcal{J}}_{\beta}f\in B_{p(\cdot),q(\cdot)}^{\alpha+\beta}(\gamma_{d})$.\\

Moreover,

  $$\displaystyle\|{\mathcal{J}}_{\beta}f\|_{B_{p(\cdot),q(\cdot)}^{\alpha+\beta}}\leq
C \|f\|_{B_{p(\cdot),q(\cdot)}^{\alpha}}.$$
\end{proof}

Now, we will study  the action of  Bessel fractional derivative $\mathcal{D}^\beta$ on variable Gaussian Besov-Lipschitz spaces $B_{p(\cdot),q(\cdot)}^{\alpha}(\gamma_{d})$ . We will use the representation (\ref{derbesselrep}) of the Bessel fractional derivative and Hardy's inequalities.\\

First, we need to consider the forward differences. Remember for a given function $f$,  the $k$-th order forward difference of $f$ starting at $t$ with increment $s$ is defined as,\\
 $$\Delta_{s}^{k}(f,t)=\displaystyle\sum_{j=0}^{k}{k\choose j}(-1)^{j}f(t+(k-j)s).$$
 The forward differences have the following properties (see Appendix 10.9 in \cite{urbina2019})
 we will need the following technical result
 \begin{lem}\label{forw-diff} For any positive integer $k$
 \begin{enumerate}
 \item[i)]$\Delta_{s}^{k}(f,t)=\Delta_{s}^{k-1}(\Delta_{s}(f,\cdot),t)=\Delta_{s}(\Delta_{s}^{k-1}(f,\cdot),t)$
 \item[ii)] $\Delta_{s}^{k}(f,t)=\displaystyle\int_{t}^{t+s}\int_{v_{1}}^{v_{1}+s}...
 \int_{v_{k-2}}^{v_{k-2}+s}\int_{v_{k-1}}^{v_{k-1}+s}f^{(k)}(v_{k})dv_{k}dv_{k-1}...dv_{2}dv_{1}$.\\
For any positive integer $k$,
\begin{equation}\label{difder}
\frac{\partial }{\partial s}(\Delta_s^k (f,t) ) = k \,\Delta_s^{k-1} (f',t+s),
\end{equation}
and for any integer $j>0$,
\begin{equation}\label{difder2}
\frac{\partial^j }{\partial t^j}(\Delta_s^k (f,t) ) =\Delta_s^{k} (f^{(j)},t).
\end{equation}
 \end{enumerate}
 \end{lem}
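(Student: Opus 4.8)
The plan is to regard the $k$-th order forward difference as the $k$-th iterate of the first-order difference operator. Write $E_s$ for the shift $E_s f(t)=f(t+s)$ and $I$ for the identity, so that $\Delta_s(f,t)=(E_s-I)f(t)$. Since $E_s$ and $I$ commute, the binomial theorem gives $(E_s-I)^k f(t)=\sum_{j=0}^{k}{k\choose j}(-1)^j f(t+(k-j)s)$, which is exactly the definition of $\Delta_s^k(f,t)$; hence $\Delta_s^k=(E_s-I)^k$ as operators. Commutativity also yields $(E_s-I)^k=(E_s-I)(E_s-I)^{k-1}=(E_s-I)^{k-1}(E_s-I)$, and evaluating each factorization at $f$ and $t$ gives the two identities in part (i) at once.

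Next I would establish the two differentiation rules directly from the defining finite sum. For (\ref{difder2}), since $t$ enters each summand only through $f(t+(k-i)s)$, differentiating $j$ times under the (finite) sum gives $\frac{\partial^{j}}{\partial t^{j}}\Delta_s^k(f,t)=\sum_{i=0}^{k}{k\choose i}(-1)^{i}f^{(j)}(t+(k-i)s)=\Delta_s^{k}(f^{(j)},t)$. For (\ref{difder}), differentiating the sum in $s$ produces the factor $(k-j)$ in each term, so the $j=k$ term drops out; using the identity ${k\choose j}(k-j)=k{k-1\choose j}$ and reindexing, the surviving sum becomes $k\sum_{j=0}^{k-1}{k-1\choose j}(-1)^{j}f'(t+(k-j)s)$. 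Recognizing $t+(k-j)s=(t+s)+(k-1-j)s$, this is precisely $k\,\Delta_s^{k-1}(f',t+s)$.

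Finally, part (ii) follows by induction on $k$, and this is where the bulk of the bookkeeping lies. The base case $k=1$ is just the fundamental theorem of calculus, $\Delta_s(f,t)=f(t+s)-f(t)=\int_t^{t+s}f'(v_1)\,dv_1$. For the inductive step, apply part (i) in the form $\Delta_s^k(f,t)=\Delta_s(\Delta_s^{k-1}(f,\cdot),t)$ and write this first-order difference as an integral, $\Delta_s^k(f,t)=\int_t^{t+s}\frac{\partial}{\partial v_1}\Delta_s^{k-1}(f,v_1)\,dv_1$. By (\ref{difder2}) with $j=1$, the integrand equals $\Delta_s^{k-1}(f',v_1)$, and the induction hypothesis applied to $f'$ (whose $(k-1)$-st derivative is $f^{(k)}$) expresses $\Delta_s^{k-1}(f',v_1)$ as the $(k-1)$-fold nested integral of $f^{(k)}$; substituting yields the claimed $k$-fold integral. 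The only real care needed throughout is to assume $f\in C^{k}$ so that all derivatives and interchanges are justified, and to keep the nesting of the integration variables $v_1,\dots,v_k$ consistent when feeding the induction hypothesis.
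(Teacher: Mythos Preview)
Your argument is correct. The operator identification $\Delta_s^k=(E_s-I)^k$ cleanly yields part~(i); the direct term-by-term differentiation of the finite sum, together with the elementary identity ${k\choose j}(k-j)=k{k-1\choose j}$, gives (\ref{difder}) and (\ref{difder2}); and your induction for part~(ii), combining $\Delta_s^k(f,t)=\Delta_s(\Delta_s^{k-1}(f,\cdot),t)$ with the fundamental theorem of calculus and (\ref{difder2}) for $j=1$, produces exactly the nested integral representation. The only tacit assumption, as you note, is that $f$ is $C^k$, which is how the lemma is used in the paper.

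As for comparison: the paper does not supply its own proof of this lemma. It states the result and refers the reader to Appendix~10.9 of \cite{urbina2019}. Your write-up is therefore a self-contained replacement for that citation; it is the standard argument and presumably close in spirit to what the reference contains.
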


 Observe that, using the Binomial Theorem and the semigroup property of $\{P_t\}$, we have
 \begin{eqnarray}\label{powerrep}
\nonumber ( P_t -I )^k f(x) &=& \sum_{j=0}^k {k \choose j} P_t^{k-j} (-I)^j f(x) = \sum_{j=0}^k {k \choose j} (-1)^jP_t^{k-j} f(x)\\
\nonumber &=&\sum_{j=0}^k {k \choose j} (-1)^jP_{(k-j)t} f(x) =\sum_{j=0}^k {k \choose j} (-1)^j u(x,(k-j)t)\\
&=& \Delta_t^k (u(x, \cdot), 0),
\end{eqnarray}
where  as usual, $u(x,t) = P_t f(x)$.\\

 %====================desigualdad en norma para forw-diff
Additionally, we will need in what follows the next result,
 \begin{lem}\label{lpforw-diff}
Let $p(\cdot)\in \mathcal{P}(\mathbb{R}^{d},\gamma_{d})$, $f\in L^{p(\cdot)}(\gamma_{d})$ and $k,n\in\mathbb{N}$ then
$$\displaystyle\|\Delta_{s}^{k}(u^{(n)},t)\|_{p(\cdot),\gamma_{d}}\leq C_{k,p(\cdot)}
s^{k}\|u^{(k+n)}(\cdot,t)\|_{p(\cdot),\gamma_{d}} $$
 \end{lem}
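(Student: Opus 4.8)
The plan is to reduce the $k$-th forward difference to a $k$-fold iterated integral of the $(n+k)$-th time derivative $u^{(n+k)}$, push the norm $\|\cdot\|_{p(\cdot),\gamma_d}$ inside through repeated Minkowski's inequality, and then exploit the quasi-monotonicity in $t$ of the time derivatives provided by Lemma \ref{kdecay}.

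First I would apply Lemma \ref{forw-diff} ii) to the function $g(\cdot)=u^{(n)}(x,\cdot)$. Since $g^{(k)}=u^{(n+k)}$, this yields the representation
$$\Delta_{s}^{k}(u^{(n)},t)=\int_{t}^{t+s}\int_{v_{1}}^{v_{1}+s}\cdots\int_{v_{k-1}}^{v_{k-1}+s}u^{(n+k)}(x,v_{k})\,dv_{k}\cdots dv_{1}.$$
Note that $f\in L^{p(\cdot)}(\gamma_d)$ together with (\ref{kdecayine}) guarantees $u^{(n+k)}(\cdot,v)\in L^{p(\cdot)}(\gamma_d)$ for every $v>0$, so all the quantities below are finite and the integrals make sense in the Bochner sense.

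Next I would take $\|\cdot\|_{p(\cdot),\gamma_d}$ of both sides and apply Minkowski's integral inequality (\ref{integralMinkowski}) once for each of the $k$ nested integrals, obtaining
$$\|\Delta_{s}^{k}(u^{(n)},t)\|_{p(\cdot),\gamma_{d}}\leq C\int_{t}^{t+s}\int_{v_{1}}^{v_{1}+s}\cdots\int_{v_{k-1}}^{v_{k-1}+s}\|u^{(n+k)}(\cdot,v_{k})\|_{p(\cdot),\gamma_{d}}\,dv_{k}\cdots dv_{1}.$$
The domain of integration forces $v_{k}\geq v_{k-1}\geq\cdots\geq v_{1}\geq t$, so in particular $v_{k}\geq t$, and the quasi-monotonicity half of Lemma \ref{kdecay} gives $\|u^{(n+k)}(\cdot,v_{k})\|_{p(\cdot),\gamma_{d}}\leq C_{p(\cdot)}\|u^{(n+k)}(\cdot,t)\|_{p(\cdot),\gamma_{d}}$. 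Pulling out this factor, which is now constant in the integration variables, leaves the elementary iterated integral of $1$ over the nested boxes, which evaluates to $s^{k}$; collecting all constants produces the asserted estimate with $C_{k,p(\cdot)}$.

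The only delicate point is bookkeeping in the variable-exponent setting: each of the $k$ applications of Minkowski introduces the constant from (\ref{integralMinkowski}), and one must check that $k$ applications still yield a finite constant depending only on $k$ and $p(\cdot)$. One also has to use the monotonicity in the correct direction — since $t\mapsto\|u^{(m)}(\cdot,t)\|_{p(\cdot),\gamma_{d}}$ is quasi-decreasing, evaluating at the smallest time $t$ produces the dominating factor — and here one invokes that $p(\cdot)$ meets the hypotheses required by Lemma \ref{kdecay}. Beyond this, the argument is routine.
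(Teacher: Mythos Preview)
Your proof is correct and follows essentially the same route as the paper's: represent $\Delta_s^k(u^{(n)},t)$ via the iterated integral formula from Lemma \ref{forw-diff} ii), push the $L^{p(\cdot)}$-norm inside with $k$ applications of Minkowski's inequality (\ref{integralMinkowski}), and then use the quasi-monotonicity from Lemma \ref{kdecay} to replace $\|u^{(k+n)}(\cdot,v_k)\|_{p(\cdot),\gamma_d}$ by $\|u^{(k+n)}(\cdot,t)\|_{p(\cdot),\gamma_d}$, leaving the iterated volume $s^k$. Your observation that Lemma \ref{kdecay} requires the stronger hypothesis $p(\cdot)\in\mathcal{P}_{\gamma_d}^{\infty}(\mathbb{R}^d)\cap LH_0(\mathbb{R}^d)$, not merely $p(\cdot)\in\mathcal{P}(\mathbb{R}^d,\gamma_d)$ as stated, is well taken; the paper's own proof uses Lemma \ref{kdecay} in the same way and thus implicitly relies on that stronger assumption as well.
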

 \begin{proof}
 From ii) of Lemma \ref{forw-diff}, we have
  $$\Delta_{s}^{k}(u^{(n)}(x,\cdot),t)=\displaystyle\int_{t}^{t+s}\int_{v_{1}}^{v_{1}+s}...\int_{v_{k-2}}^{v_{k-2}+s}
 \int_{v_{k-1}}^{v_{k-1}+s}u^{(k+n)}(x,v_{k})dv_{k}dv_{k-1}...dv_{2}dv_{1},$$
 then, using Minkowski's integral inequality (\ref{integralMinkowski}) and Lemma \ref{kdecay} $k$-times respectively
 \begin{eqnarray*}
\|\Delta_{s}^{k}(u^{(n)},t)\|_{p(\cdot),\gamma_{d}}&\leq&C^{k}\displaystyle\int_{t}^{t+s}\int_{v_{1}}^{v_{1}+s}...
\int_{v_{k-2}}^{v_{k-2}+s}\int_{v_{k-1}}^{v_{k-1}+s}\|u^{(k+n)}(\cdot,v_{k})\|_{p(\cdot),\gamma_{d}}dv_{k}dv_{k-1}...dv_{2}dv_{1}\\
&\leq&C^{k}(C_{p(\cdot)})^{k}s^{k}\|u^{(k+n)}(\cdot,t)\|_{p(\cdot),\gamma_{d}}=C_{k,p(\cdot)}s^{k}\left\|\frac{\partial^{k+n}}{\partial
 t^{k+n}}u(\cdot,t)\right\|_{p(\cdot),\gamma_{d}}. 
 \end{eqnarray*}
\end{proof}
%================caso alfa mayor o igual a 1

%===========================================derivada Bessel
We are now ready to  consider  the action Gaussian Bessel fractional derivatives on general $B_{p(\cdot),q(\cdot)}^{\alpha}(\gamma_{d})$ spaces. The result analogous to Theorem 8 in \cite{gatPinurb}.
\begin{teo}\label{DerBessel>1}
Let $0<\beta<\alpha$, $p(\cdot)\in\mathcal{P}_{\gamma_{d}}^{\infty}(\mathbb{R}^{d})\cap LH_{0}(\mathbb{R}^{d})$ with $1< p_{-}\leq p_{+} <\infty$ and $q(\cdot)\in\mathcal{P}_{0,\infty}$. Then, the Gaussian Bessel fractional derivative $\mathcal{D}_{\beta}$ is
bounded from $B_{p(\cdot),q(\cdot)}^{\alpha}(\gamma_{d})$ into
$B_{p(\cdot),q(\cdot)}^{\alpha-\beta}(\gamma_{d})$.
\end{teo}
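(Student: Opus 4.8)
The plan is to fix an integer $n>\alpha$, so that $n$ may be used to compute both the $B_{p(\cdot),q(\cdot)}^{\alpha}(\gamma_d)$ seminorm of $f$ and, since $n>\alpha>\alpha-\beta$, the $B_{p(\cdot),q(\cdot)}^{\alpha-\beta}(\gamma_d)$ seminorm of $\mathcal{D}^{\beta}f$, and to estimate $\left\|t^{n-(\alpha-\beta)}\left\|\partial_t^{n}P_t\mathcal{D}^{\beta}f\right\|_{p(\cdot),\gamma_d}\right\|_{q(\cdot),\mu}$. First I would record that $\mathcal{D}^{\beta}f\in L^{p(\cdot)}(\gamma_d)$, the analogue for $\mathcal{D}^{\beta}$ of the membership used for $\mathcal{J}_\beta$; this follows from the representation (\ref{derbesselrep}) by splitting $\int_0^{\infty}=\int_0^{1}+\int_1^{\infty}$, using $\|(e^{-s}P_s-I)^k f\|_{p(\cdot),\gamma_d}\le 2^k\|f\|_{p(\cdot),\gamma_d}$ on $(1,\infty)$ and the $\alpha$-regularity of $f$ on $(0,1)$. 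Since $\mathcal{D}^{\beta}$ is a function of $L$ it commutes with $P_t$ and with $\partial_t$, so differentiating (\ref{derbesselrep}) under the integral sign (justified by dominated convergence as in the previous theorems) gives
\[
\partial_t^{n}P_t\mathcal{D}^{\beta}f=\frac{1}{c^k_{\beta}}\int_0^{\infty}s^{-\beta-1}(e^{-s}P_s-I)^k u^{(n)}(\cdot,t)\,ds,
\]
where $u(\cdot,t)=P_tf$ and $k$ is the smallest integer greater than $\beta$; by (\ref{powerrep}) the inner term is a weighted $k$-th forward difference of $u^{(n)}$.

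Next I would split the $s$-integral at $s=t$. On $(t,\infty)$ no cancellation should be needed: expanding $(e^{-s}P_s-I)^k$ by the binomial theorem, bounding each $e^{-(k-j)s}\le1$ and using the monotonicity half of Lemma \ref{kdecay} (every argument $t+(k-j)s\ge t$) yields $\|(e^{-s}P_s-I)^k u^{(n)}(\cdot,t)\|_{p(\cdot),\gamma_d}\le C\|u^{(n)}(\cdot,t)\|_{p(\cdot),\gamma_d}$, so after Minkowski's inequality (\ref{integralMinkowski}) this piece is $\le Ct^{-\beta}\|u^{(n)}(\cdot,t)\|_{p(\cdot),\gamma_d}$; the weight $t^{n-(\alpha-\beta)}$ then produces exactly $t^{n-\alpha}\|u^{(n)}(\cdot,t)\|_{p(\cdot),\gamma_d}$, whose $\|\cdot\|_{q(\cdot),\mu}$-norm is controlled by $\|f\|_{B^{\alpha}_{p(\cdot),q(\cdot)}}$. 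On $(0,t)$ the decisive algebraic step is to write, with $X=e^{-s}(P_s-I)$ and $Y=(e^{-s}-1)I$ commuting,
\[
(e^{-s}P_s-I)^k=\sum_{m=0}^{k}\binom{k}{m}e^{-ms}(e^{-s}-1)^{k-m}(P_s-I)^m,
\]
and to apply Lemma \ref{lpforw-diff} to each pure difference, $\|(P_s-I)^m u^{(n)}(\cdot,t)\|_{p(\cdot),\gamma_d}\le C s^{m}\|u^{(n+m)}(\cdot,t)\|_{p(\cdot),\gamma_d}$.

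The main obstacle is precisely that the exponential weight $e^{-s}$ prevents a direct application of Lemma \ref{lpforw-diff} to $(e^{-s}P_s-I)^k$ as a whole, and forces the lower-order terms $m<k$ into play, which a naive estimate would saddle with spurious positive powers of $t$. The remedy is the extra smallness $|e^{-s}-1|^{k-m}\le\min(s,1)^{k-m}$: for the terms $m<k$ the scalar integral $\int_0^{\infty}s^{m-\beta-1}|e^{-s}-1|^{k-m}e^{-ms}\,ds$ converges to a finite constant $c_m$ (at $0$ because $s^{m-\beta-1+(k-m)}=s^{k-\beta-1}$ with $k>\beta$, at $\infty$ because of $e^{-ms}$, or of $s^{-\beta-1}$ when $m=0$), so that term contributes $C\,t^{n-\alpha+\beta}\|u^{(n+m)}(\cdot,t)\|_{p(\cdot),\gamma_d}$; for the principal term $m=k$ one keeps $\int_0^{t}s^{k-\beta-1}e^{-ks}\,ds\le C\min(1,t^{k-\beta})$, which restores the correct weight $t^{n+k-\alpha}$ near the origin.

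Finally I would check that each resulting expression lies in $L^{q(\cdot)}((0,\infty),\mu)$ by splitting $(0,\infty)$ at $t=1$. For the terms $m<k$ the factor $t^{\beta-m}$ is bounded on $(0,1]$ (since $m\le k-1\le\beta$) and the expression is dominated there by the $B^{\alpha}_{p(\cdot),q(\cdot)}$ seminorm computed with the integer $n+m$; the principal term is already in the form $t^{n+k-\alpha}\|u^{(n+k)}(\cdot,t)\|_{p(\cdot),\gamma_d}$ near the origin, i.e. the seminorm with integer $n+k$. In every case the behaviour at infinity follows from the decay $\|u^{(n+m)}(\cdot,t)\|_{p(\cdot),\gamma_d}\le Ct^{-(n+m)}\|f\|_{p(\cdot),\gamma_d}$ of Lemma \ref{kdecay}, which gives the integrable power $t^{\beta-\alpha-m}$ precisely because $\beta<\alpha$; equivalently, these radial estimates are instances of the Hardy inequalities (\ref{hardyineq0atr})–(\ref{hardyineqtainftyr}). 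Summing the two ranges of $s$ and adding $\|\mathcal{D}^{\beta}f\|_{p(\cdot),\gamma_d}$ then yields $\|\mathcal{D}^{\beta}f\|_{B^{\alpha-\beta}_{p(\cdot),q(\cdot)}}\le C\|f\|_{B^{\alpha}_{p(\cdot),q(\cdot)}}$, which is the assertion.
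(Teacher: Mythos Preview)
Your seminorm argument is correct and follows a genuinely different route from the paper. The paper introduces the auxiliary function $w(x,t)=e^{-t}u^{(n)}(x,t)$, applies the classical Hardy inequality (\ref{hardy1}) together with (\ref{difder}) to shift the forward difference $\Delta_s^k(w,\,\cdot\,)$ from starting point $0$ to starting point $r$, and then uses Leibniz's rule on the product $e^{-t}u^{(n)}$ to separate the exponential weight; only afterwards does it split the $r$-integral at $t$ (and only for the top term $j=0$). Your approach bypasses all of this by the algebraic identity
\[
(e^{-s}P_s-I)^k=\sum_{m=0}^{k}\binom{k}{m}e^{-ms}(e^{-s}-1)^{k-m}(P_s-I)^m,
\]
which isolates the pure Poisson differences so that Lemma~\ref{lpforw-diff} applies directly at the fixed $t>0$; the smallness $|e^{-s}-1|^{k-m}\le s^{k-m}$ near the origin then does the same job that Leibniz and Hardy do in the paper. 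This is cleaner and avoids the pointwise Hardy step entirely; the paper's route, on the other hand, packages the lower-order terms via the inclusions $B^{\alpha}_{p(\cdot),q(\cdot)}\subset B^{\alpha-\beta+(k-j)}_{p(\cdot),q(\cdot)}$ in a single stroke, whereas you need the ad~hoc split at $t=1$ together with the decay (\ref{kdecayine}) on $(1,\infty)$.

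One point that deserves more care is your sketch that $\mathcal{D}^{\beta}f\in L^{p(\cdot)}(\gamma_d)$. On $(0,1)$ you cannot simply invoke Lemma~\ref{lpforw-diff} for $\|(P_s-I)^m f\|_{p(\cdot),\gamma_d}$, because that lemma evaluates $u^{(m)}$ at the base point $t$, and here $t=0$. The paper handles this by the same Hardy/Leibniz mechanism (applied to $v(x,t)=e^{-t}u(x,t)$), which moves the evaluation to a positive argument and reduces the estimate to $\int_0^{\infty} r^{(k-j)-(\beta-j)}\bigl\|\partial_r^{\,k-j}P_r f\bigr\|_{p(\cdot),\gamma_d}\,\frac{dr}{r}$, finite by $B^{\alpha}_{p(\cdot),q(\cdot)}\subset B^{\beta-j}_{p(\cdot),1}$. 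Your binomial expansion can be made to work here too (use Lemma~\ref{forw-diff}\,ii) and Minkowski, then Fubini in $s$ and the innermost variable), but it is not as immediate as your one-line description suggests.
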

\begin{proof}

Let $f\in B_{p(\cdot),q(\cdot)}^{\alpha}(\gamma_{d})$, $k\in \mathbb{N}$ such that $k-1\leq \beta<k$ and set $v(x,t)=e^{-t} u(x,t)$ then using the classical Hardy's inequality (\ref{hardy1}), the fundamental theorem of calculus and Lemma \ref{forw-diff},
 \begin{eqnarray*}
 \nonumber |\mathcal{D}_{\beta}f(x)|&\leq&\displaystyle\frac{1}{c_{\beta}}\int_{0}^{+\infty}s^{-\beta-1}|\Delta_{s}^{k}(v(x,\cdot),0)|ds\\
&\leq&\displaystyle\frac{1}{c_{\beta}}\int_{0}^{+\infty}s^{-\beta-1}\int_{0}^{s}\left|\frac{\partial}{\partial r}\Delta_{r}^{k}(v(x,\cdot),0)\right|dr\,ds\leq\displaystyle\frac{k}{\beta c_{\beta}}\int_{0}^{+\infty}r^{-\beta}|\Delta_{r}^{k-1}(v'(x,\cdot),r)|dr
\end{eqnarray*}
and by Minkowski's integral inequality (\ref{integralMinkowski}) this implies 
$$\|\mathcal{D}_{\beta}f\|_{p(\cdot),\gamma_{d}}\leq\displaystyle\frac{k}{\beta c_{\beta}}C
\int_{0}^{+\infty}r^{-\beta}\|\Delta_{r}^{k-1}(v',r)\|_{p(\cdot),\gamma_{d}}dr.$$
Now, using Lemma \ref{forw-diff} and again Minkowski's integral inequality (\ref{integralMinkowski})

$\|\Delta_{r}^{k-1}(v',r)\|_{p(\cdot),\gamma_{d}}\leq C^{k}\displaystyle\int_{r}^{2r}\int_{v_{1}}^{v_{1}+r}...
 \int_{v_{k-2}}^{v_{k-2}+r}\|v^{(k)}(\cdot,v_{k-1})\|_{p(\cdot),\gamma_{d}}dv_{k-1}dv_{k-2}...dv_{2}dv_{1},$\\
and by Leibnitz's differentiation rule for the product
 \begin{eqnarray*}
\|v^{(k)}(\cdot,v_{k-1})\|_{p(\cdot),\gamma_{d}}&=&\displaystyle\left\|\sum_{j=0}^{k}{k\choose j}(e^{-v_{k-1}})^{(j)}u^{(k-j)}(\cdot,v_{k-1})\right\|_{p(\cdot),\gamma_{d}}\\
&\leq&\displaystyle\sum_{j=0}^{k}{k\choose j}e^{-v_{k-1}}\|u^{(k-j)}(\cdot,v_{k-1})\|_{p(\cdot),\gamma_{d}}.
  \end{eqnarray*}
  Then, by Lemma \ref{kdecay}
  \begin{eqnarray*}
&& \|\Delta_{r}^{k-1}(v',r)\|_{p(\cdot),\gamma_{d}}\\
&& \quad \quad \quad \leq C^{k}\displaystyle\sum_{j=0}^{k}{k\choose j}\int_{r}^{2r}\int_{v_{1}}^{v_{1}+r}...
 \int_{v_{k-2}}^{v_{k-2}+r}e^{-v_{k-1}}\|u^{(k-j)}(\cdot,v_{k-1})\|_{p(\cdot),\gamma_{d}}dv_{k-1}dv_{k-2}...dv_{2}dv_{1}\\
 &&\quad \quad \quad \leq C_{k,p(\cdot)} \sum_{j=0}^{k}{k\choose j}r^{k-1}e^{-r}\|u^{(k-j)}(\cdot,r)\|_{p(\cdot),\gamma_{d}}.
 \end{eqnarray*}
Therefore, using the $L^{p(\cdot)}$-boundedness of $P_{t}$ (see \cite{MorPinUrb})
\begin{eqnarray*}
 \nonumber \|\mathcal{D}_{\beta}f\|_{p(\cdot),\gamma_{d}}&\leq&\displaystyle\frac{k}{\beta c_{\beta}}C_{k,p(\cdot)}
\sum_{j=0}^{k}{k\choose j}\int_{0}^{+\infty}
 r^{k-\beta-1}e^{-r}\|u^{(k-j)}(\cdot,r)\|_{p(\cdot),\gamma_{d}}dr\\
&=&C_{k,p(\cdot)}\displaystyle\frac{k}{\beta c_{\beta}}\sum_{j=0}^{k-1}{k\choose j}\int_{0}^{+\infty}
 r^{(k-j)-(\beta-j)-1}e^{-r}\left\|\frac{\partial^{k-j}}{\partial r^{k-j}}P_{r}f\right\|_{p(\cdot),\gamma_{d}}dr\\
&&\hspace{2.5cm} +\;C_{k,p(\cdot)}\frac{k}{\beta c_{\beta}}\int_{0}^{+\infty}
 r^{k-\beta-1}e^{-r}\|P_{r}f\|_{p(\cdot),\gamma_{d}}dr\\
 &\leq&C_{k,p(\cdot)}\displaystyle\frac{k}{\beta c_{\beta}}\sum_{j=0}^{k-1}{k\choose j}\int_{0}^{+\infty}
 r^{(k-j)-(\beta-j)-1}\left\|\frac{\partial^{k-j}}{\partial r^{k-j}}P_{r}f\right\|_{p(\cdot),\gamma_{d}}dr\\
&& \hspace{2.5cm} + \;C_{k,p(\cdot)}\frac{k}{\beta c_{\beta}}\int_{0}^{+\infty}
 r^{k-\beta-1}e^{-r}\|f\|_{p(\cdot),\gamma_{d}}dr
\end{eqnarray*}
 Thus,
\begin{eqnarray*}
 \nonumber \|\mathcal{D}_{\beta}f\|_{p(\cdot),\gamma_{d}}&\leq&C_{k,p(\cdot)}\displaystyle\frac{k}{\beta c_{\beta}}\sum_{j=0}^{k-1}{k\choose j}\int_{0}^{+\infty}
 r^{k-j-(\beta-j)}\left\|\frac{\partial^{k-j}}{\partial r^{k-j}}P_{r}f\right\|_{p(\cdot),\gamma_{d}}\frac{dr}{r}\\
&&\hspace{2.5cm} + C_{k,p(\cdot)}\frac{k \Gamma(k-\beta)}{\beta c_{\beta}}\|f\|_{p(\cdot),\gamma_{d}}<\infty,
\end{eqnarray*}
 since $f\in B_{p(\cdot),q(\cdot)}^{\alpha}(\gamma_{d})\subset B_{p(\cdot),1}^{\beta-j}(\gamma_d) $ as $\alpha>\beta>\beta-j\geq 0$, for $j\in\{0,...,k-1\}$. Hence, $\mathcal{D}_{\beta}f\in L^{p(\cdot)}(\gamma_d)$.\\

%===============condici'on Besov
On the other hand,
 $$P_{t}(e^{-s}P_{s}-I)^{k}f(x)=\displaystyle\sum_{j=0}^{k}{k\choose j}(-1)^{j}e^{-s(k-j)}u(x,t+(k-j)s).$$
Let $n$ be the smaller integer greater than $\alpha$, i.e. $ n-1 \leq \alpha  < n$, we have
 \begin{eqnarray*}
 \frac{\partial^{n}}{\partial t^{n}}P_{t}(\mathcal{D}_{\beta}f)(x)&=&\frac{1}{c_{\beta}}
 \int_{0}^{+\infty}s^{-\beta-1}\sum_{j=0}^{k}{k\choose j}(-1)^{j}e^{-s(k-j)}u^{(n)}(x,t+(k-j)s)ds\\
 &=&\frac{e^t}{c_{\beta}}
 \int_{0}^{+\infty}s^{-\beta-1}\sum_{j=0}^{k}{k\choose j}(-1)^{j}e^{-(t+s(k-j))}u^{(n)}(x,t+(k-j)s)ds\\
&=&\frac{e^{t}}{c_{\beta}}
 \int_{0}^{+\infty}s^{-\beta-1}\Delta_{s}^{k}(w(x,\cdot),t) ds,
  \end{eqnarray*}
where $w(x,t)=\displaystyle e^{-t}u^{(n)}(x,t)$. Now using the fundamental theorem of calculus,
 \begin{eqnarray*}
 \frac{\partial^{n}}{\partial t^{n}}P_{t}(\mathcal{D}_{\beta}f)(x)&=&\frac{e^{t}}{c_{\beta}}
 \int_{0}^{+\infty}s^{-\beta-1}\Delta_{s}^{k}(w(x,\cdot),t)ds\\
&=&\frac{e^{t}}{c_{\beta}}
  \int_{0}^{+\infty}s^{-\beta-1}\int_{0}^{s}\frac{\partial}{\partial
  r}\Delta_{r}^{k}(w(x,\cdot),t)dr\,ds.
  \end{eqnarray*}
Then, using classical Hardy's inequality (\ref{hardy1}), and Lemma \ref{forw-diff},
 \begin{eqnarray*}
 \left|\frac{\partial^{n}}{\partial t^{n}}P_{t}\mathcal({D}_{\beta}f)(x)\right|&\leq&\frac{e^{t}}{c_{\beta}}
  \int_{0}^{+\infty}s^{-\beta-1}\int_{0}^{s} \left|\frac{\partial}{\partial
  r}\Delta_{r}^{k}(w(x,\cdot),t) \right|drds\\
&\leq&\frac{e^{t}}{c_{\beta}\beta}
  \int_{0}^{+\infty}r \left|\frac{\partial}{\partial
  r}\Delta_{r}^{k}(w(x,\cdot),t)\right|r^{-\beta-1}dr\\
  &=&\frac{ke^{t}}{c_{\beta}\beta}
  \int_{0}^{+\infty}r^{-\beta}|\Delta_{r}^{k-1}(w'(x,\cdot),t+r)|dr
  \end{eqnarray*}
  and by Minkowski's integral inequality (\ref{integralMinkowski}) we get
 \begin{eqnarray*}
 \left\|\frac{\partial^{n}}{\partial t^{n}}P_{t}(\mathcal{D}_{\beta}f)\right\|_{p(\cdot),\gamma_{d}}&\leq&C\frac{ke^{t}}{\beta c_{\beta}}
  \int_{0}^{+\infty}r^{-\beta}\|\Delta_{r}^{k-1}(w',t+r)\|_{p(\cdot),\gamma_{d}}dr.
  \end{eqnarray*}
  Now, by analogous argument as above, Lemma \ref{forw-diff} and again Leibnitz's differentiation rule for the product, give us
 \begin{eqnarray*}
\|\Delta_{r}^{k-1}(w',t+r)\|_{p(\cdot),\gamma_{d}}&\leq&C_{k,p(\cdot)}\sum_{j=0}^{k}{k\choose j}r^{k-1}e^{-(t+r)}\|u^{(k+n-j)}(\cdot,t+r)\|_{p(\cdot),\gamma_{d}},
\end{eqnarray*}
and this implies that
 \begin{eqnarray*}\label{pest4}
 \left\|\frac{\partial^{n}}{\partial t^{n}}P_{t}(\mathcal{D}_{\beta}f)\right\|_{p(\cdot),\gamma_{d}}&\leq&C_{k,p(\cdot)}e^{t}\frac{k}{c_{\beta}\beta}
  \int_{0}^{+\infty}r^{-\beta}\big(\sum_{j=0}^{k}{k\choose j}r^{k-1}e^{-(t+r)}\|u^{(k+n-j)}(\cdot,t+r)\|_{p(\cdot),\gamma_{d}}
dr\\
 &=&C_{k,p(\cdot)}\frac{k}{c_{\beta}\beta}
  \sum_{j=0}^{k}{k\choose j}\int_{0}^{+\infty}r^{k-\beta-1}e^{-r}\|u^{(k+n-j)}(\cdot,t+r)\|_{p(\cdot),\gamma_{d}}
dr.
\end{eqnarray*}
Thus,
 \begin{eqnarray*}
 &&\left\|t^{n-(\alpha-\beta)}\left\|\frac{\partial^{n}}{\partial t^{n}}P_{t}(\mathcal{D}_{\beta}f)\right\|_{p(\cdot),\gamma_{d}}\right\|_{q(\cdot),\mu}\\
 &&\quad \quad \leq C_{k,p(\cdot)}\frac{k}{c_{\beta}\beta}
  \sum_{j=0}^{k}{k\choose j} \left\|t^{n-(\alpha-\beta)}\int_{0}^{+\infty}r^{k-\beta-1}e^{-r}\|u^{(k+n-j)}(\cdot,t+r)\|_{p(\cdot),\gamma_{d}}
dr\right\|_{q(\cdot),\mu}.
\end{eqnarray*}
Now, for each $1\leq j\leq k$,
$0<\alpha-\beta+k-j\leq\alpha$ and  by lemma  \ref{kdecay}
\begin{eqnarray*}
&&
\left\|t^{n-(\alpha-\beta)}\int_{0}^{\infty}r^{k-\beta-1}e^{-r}\|u^{(k+n-j)}(\cdot,t+r)\|_{p(\cdot),\gamma_{d}}
dr\right\|_{q(\cdot),\mu}\\
&& \quad \quad \quad  \leq  C_{p(\cdot)}
\left \|t^{n-(\alpha-\beta)}\|u^{(n+k-j)}(\cdot,t)\|_{p(\cdot),\gamma_{d}}\int_{0}^{+\infty}r^{k-\beta-1}e^{-r}
dr\right\|_{q(\cdot),\mu}\\
&& \quad \quad \quad  = C_{p(\cdot)} \Gamma(k-\beta)
\left\|t^{n+(k-j)-(\alpha-\beta+k-j)}\|u^{(n+k-j)}(\cdot,t)\|_{p(\cdot),\gamma_{d}}\right\|_{q(\cdot),\mu}<\infty,
\end{eqnarray*}
as  $f\in B_{p(\cdot),q(\cdot)}^{\alpha}(\gamma_{d})\subset
B_{p(\cdot),q(\cdot)}^{\alpha-\beta+(k-j)}(\gamma_{d})$ for any $1\leq j \leq k$.\\

Now, for the case $j=0$,
 \begin{eqnarray*}
&&\left\|t^{n-(\alpha-\beta)}\int_{0}^{+\infty}r^{k-\beta-1}e^{-r}\|u^{(n+k)}(\cdot,t+r)\|_{p(\cdot),\gamma_{d}}
dr\right\|_{q(\cdot),\mu}\\
&&\quad \quad \quad \leq\left\|t^{n-(\alpha-\beta)}\int_{0}^{t}r^{k-\beta-1}e^{-r}\|u^{(n+k)}(\cdot,t+r)\|_{p(\cdot),\gamma_{d}}
dr\right\|_{q(\cdot),\mu}\\
&&\quad \quad \quad \quad \quad +\left\|t^{n-(\alpha-\beta)}\int_{t}^{+\infty}r^{k-\beta-1}e^{-r}\|u^{(n+k)}(\cdot,t+r)\|_{p(\cdot),\gamma_{d}}
dr\right\|_{q(\cdot),\mu}\\
&&\quad \quad \quad =(I)+(II).
\end{eqnarray*}
Using Lemma \ref{kdecay}, and $k>\beta$,
 \begin{eqnarray*}
(I) &\leq&C_{p(\cdot)}\left\|t^{n-(\alpha-\beta)}\int_{0}^{t}r^{k-\beta-1}\|u^{(n+k)}(\cdot,t)\|_{p(\cdot),\gamma_{d}}
dr\right\|_{q(\cdot),\mu}\\
&=&C_{p(\cdot)}\left\|t^{n-(\alpha-\beta)}\|u^{(n+k)}(\cdot,t)\|_{p(\cdot),\gamma_{d}}\int_{0}^{t}r^{k-\beta-1}dr\right\|_{q(\cdot),\mu}\\
&=&\frac{C_{p(\cdot)}}{k-\beta}\left\|t^{n+k-\alpha}\|u^{(n+k)}(\cdot,t)\|_{p(\cdot),\gamma_{d}}\right\|_{q(\cdot),\mu}<\infty,
\end{eqnarray*}
since $f\in B_{p(\cdot),q(\cdot)}^{\alpha}(\gamma_{d})$ and $n+k>\alpha$.\\

For the second term, using Lemma \ref{kdecay} and Hardy's inequality  (\ref{hardyineqtainftyr})
 \begin{eqnarray*}
(II) &\leq&C_{p(\cdot)}\left\|t^{n-(\alpha-\beta)}\int_{t}^{+\infty}r^{k-\beta-1}\|u^{(n+k)}(\cdot,r)\|_{p(\cdot),\gamma_{d}}
dr\right\|_{q(\cdot),\mu}\\
&\leq&C_{p(\cdot)}C_{q(\cdot)}\left\|r^{n+k-\alpha}\|u^{(n+k)}(\cdot,r)\|_{p(\cdot),\gamma_{d}}\right\|_{q(\cdot),\mu}<\infty,
\end{eqnarray*}
since $f\in B_{p(\cdot),q(\cdot)}^{\alpha}(\gamma_{d})$. Therefore,  $\mathcal{D}_{\beta}f\in
B_{p(\cdot),q(\cdot)}^{\alpha-\beta}(\gamma_{d})$.\\

 Moreover,
 \begin{eqnarray*}
\|\mathcal{D}_{\beta}f\|_{B_{p(\cdot),q(\cdot)}^{\alpha-\beta}}&=&\|\mathcal{D}_{\beta}f\|_{p(\cdot),\gamma_{d}}+
\left\|t^{n-(\alpha-\beta)}\left\|\frac{\partial^{n}}{\partial t^{n}}P_{t}\mathcal{D}_{\beta}f\right\|_{p(\cdot),\gamma_{d}}\right\|_{q(\cdot),\mu}\\
 &\leq&C^{1}_{k,p(\cdot)}\|f\|_{p(\cdot),\gamma_{d}}+\frac{k}{c_{\beta}\beta}\sum_{j=0}^{k}{k\choose j} C^{2}_{p(\cdot),q(\cdot)}\left\|r^{n-\alpha}\left\|\frac{\partial^{n}}{\partial r^{n}}P_{r}f\right\|_{p(\cdot),\gamma_{d}}\right\|_{q(\cdot),\mu}\\
&\leq&C_{p(\cdot),q(\cdot)}\|f\|_{B_{p(\cdot),q(\cdot)}^{\alpha}}
\end{eqnarray*}
 \end{proof}

The boundedness of Gaussian Riesz potentials on variable Gaussian Besov-Lipschitz spaces and the regularity of all these operators on variable  Gaussian Triebel-Lizorkin spaces,
spaces that were also defined in \cite{Pinrodurb}, will be considered in a forthcoming paper.

%%======================================

%======================================

\end{document}